
\documentclass[reqno,12pt]{amsart}

\usepackage{amsmath, amssymb}
\usepackage[hypertex]{hyperref}
\usepackage{color}

\setlength{\topmargin}{0mm}
\setlength{\oddsidemargin}{0mm}
\setlength{\evensidemargin}{0mm}
\setlength{\textheight}{225mm}
\setlength{\textwidth}{160mm}

\numberwithin{equation}{section}

\newcommand{\Z}{{\mathbb Z}}
\newcommand{\Q}{{\mathbb Q}}

\newcommand{\C}{{\mathbb C}}

\newcommand{\CC}{{\mathcal C}}

\newcommand{\al}{\alpha}

\newcommand{\la}{\langle}
\newcommand{\ra}{\rangle}

\DeclareMathOperator{\Aut}{Aut}

\DeclareMathOperator{\Hom}{Hom}
\DeclareMathOperator{\iden}{id}

\DeclareMathOperator{\Irr}{Irr}

\DeclareMathOperator{\qdim}{qdim}
\DeclareMathOperator{\Rep}{Rep}

\DeclareMathOperator{\tr}{tr}

\newtheorem{theorem}{Theorem}[section]
\newtheorem{proposition}[theorem]{Proposition}
\newtheorem{lemma}[theorem]{Lemma}
\newtheorem{corollary}[theorem]{Corollary}
\newtheorem{remark}[theorem]{Remark}

\newtheorem{hypothesis}[theorem]{Hypothesis}

\newcommand{\SC}[1]{\Irr(#1)_{\mathrm{sc}}}
\newcommand{\abs}[1]{\lvert{#1}\rvert}

\begin{document}

\title[Simple current extensions  of VOAs]
{Simple current extensions of tensor products of vertex operator algebras}

\author[H. Yamada]{Hiromichi Yamada}
\address{Department of Mathematics, Hitotsubashi University, Kunitachi,
Tokyo 186-8601, Japan}
\email{yamada.h@r.hit-u.ac.jp}

\author[H. Yamauchi]{Hiroshi Yamauchi}
\address{Department of Mathematics, 
Tokyo Woman's Christian University, Suginami-ku, Tokyo 167-8585, Japan}
\email{yamauchi@lab.twcu.ac.jp}

\subjclass[2010]{17B69, 17B65}

\keywords{vertex operator algebra, simple current extension, fusion rule}

\begin{abstract} 
We study simple current extensions of tensor products of two vertex operator algebras 
satisfying certain conditions. 
We establish the relationship between the fusion rule for the simple current extension and 
the fusion rule for a tensor factor. 
In a special case, 
we construct a chain of simple current extensions. 
We discuss certain irreducible twisted modules for the simple current extension as well.
\end{abstract}

\maketitle

\section{Introduction}\label{sec:introduction}

Simple current extensions of vertex operator algebras have been studied extensively, 
see \cite{Carnahan2014, CKL2015, vEMS2017, Yamauchi2004} 
and the references therein. 
Extensions of vertex operator algebras in more general settings were also investigated, 
see for example \cite{CKM2017, HKL2015, KO2002}. 
In this paper, 
we consider simple current extensions of tensor products of two vertex operator algebras 
with suitable properties. 

We argue under the following setting (Hypothesis \ref{hypo:W_V}). 
Let $W$ and $V$ be simple, self-dual, rational, and $C_2$-cofinite  
vertex operator algebras of CFT-type. 
Assume that all irreducible $V$-modules are simple currents. 
This means that the set of equivalence classes $\Irr(V)$ of irreducible $V$-modules 
is a $C$-graded set $\{ V^\al \mid \alpha \in C \}$ of simple current $V$-modules 
for a finite abelian group $C$ with $V^0 = V$ and 
$V^\alpha \boxtimes_V V^\beta = V^{\alpha+\beta}$ for $\alpha$, $\beta \in C$.  
Let $D$ be a subgroup of $C$ and $\{ W^\beta \mid \beta \in D\}$ a $D$-graded set of 
simple current $W$-modules with $W^0 = W$ and  
$W^\alpha \boxtimes_W W^\beta = W^{\alpha+\beta}$ for $\alpha$, $\beta \in D$.
Assume further that the conformal weight of $W^\beta \otimes V^\beta$ is an integer 
for $\beta \in D$ and that 
the direct sum $U = \bigoplus_{\beta \in D} W^\beta \otimes V^\beta$ 
has a structure of a simple vertex operator algebra which extends the 
$W \otimes V$-module structure on $U$. 
The vertex operator algebra $U$ is simple, self-dual, rational, $C_2$-cofinite, and of CFT type.  

Such a triple $(U, V, W)$ appears when we consider the commutant 
of a subalgebra in a vertex operator algebra. 
In fact, the commutant of $V$ in $U$ is $W$, and the commutant of $W$ in $U$ is $V$ 
in our setting. 

One of the important examples is the parafermion vertex operator algebra 
$K(\mathfrak{g},k)$ associated with a finite dimensional simple Lie algebra $\mathfrak{g}$ 
and a positive integer $k$. 
Let $L_{\widehat{\mathfrak{g}}}(k,0)$ be a simple affine vertex operator algebra 
for the affine Kac-Moody Lie algebra $\widehat{\mathfrak{g}}$ at level $k$, 
and 
let $Q_L$ be the sublattice of the root lattice of $\mathfrak{g}$ spanned by the long roots. 
Then $L_{\widehat{\mathfrak{g}}}(k,0)$ contains a lattice vertex operator algebra 
$V_{\sqrt{k}Q_L}$,  
and 
$K(\mathfrak{g},k)$ is the commutant of $V_{\sqrt{k}Q_L}$ 
in $L_{\widehat{\mathfrak{g}}}(k,0)$.  
Moreover, 
$L_{\widehat{\mathfrak{g}}}(k,0)$ is a simple current extension 
of $K(\mathfrak{g},k) \otimes V_{\sqrt{k}Q_L}$. 
Thus $W = K(\mathfrak{g},k)$, $V = V_{\sqrt{k}Q_L}$, and $U = L_{\widehat{\mathfrak{g}}}(k,0)$ 
satisfy the above conditions. 
As to the properties of parafermion vertex operator algebras, 
see for example 
\cite{ADJR2017, ALY2014, ALY2019, DR2017}. 
Another important example of such a triple $(U,V,W)$ is known in a certain W-algebras,  
see for example \cite{ACL2017, CL2017}. 

The representation theory of simple current extensions was developed 
in \cite{Yamauchi2004}. 
Since any irreducible $U$-module is a direct sum of inequivalent irreducible 
$W \otimes V$-modules by our assumption on the vertex operator algebras $W$ and $V$, 
the argument for the representations of $U$ is much simpler 
than that for a general one.

In this paper,  
we classify the irreducible $U$-modules (Theorem \ref{thm:all_modules}), 
and establish the relationship between 
the fusion rules for $U$ and $W$ (Theorem \ref{thm:fusion_products_4_U_W}) 
under the above setting. 
The relationship between the fusion algebras of $U$ and $W$ 
is considered as well, see Section \ref{subsec:RU_RW}.  
The argument for the classification of irreducible $U$-modules is standard 
\cite{Yamauchi2004}.
Our treatment of the relationship between the fusion rules for $U$ and $W$ 
is slightly different from the proofs of 
\cite[Theorem 5.2]{ADJR2017} and \cite[Theorem 4.2]{DW2016}. 
In fact, the quantum dimension in the sense of \cite{DJX2013} plays a role in those papers, 
whereas we use a braided tensor functor \cite[Theorem 2.67]{CKM2017}, 
see Theorem \ref{thm:functor_F} below. 

One of the features of a $D$-graded simple current extension is that 
an irreducible twisted module associated with an automorphism 
induced by an element of the character group $D^\ast$ of $D$ is constructed explicitly 
\cite[Theorem 3.2]{Yamauchi2004}. 
We are concerned with not only irreducible $U$-modules but also those 
irreducible twisted $U$-modules (Theorem \ref{thm:all_modules}). 

In Section \ref{subsec:RU_RW}, we discuss a duality between the fusion algebras 
of $U$ and $W$. 
Since $\Irr(V) = \{ V^\alpha \mid \alpha \in C \}$ is a $C$-graded set of simple 
current $V$-modules, the group $C$ has a structure of a quadratic space with 
a quadratic form $q_V$ and its associated bilinear form $b_V$ 
\cite[Theorem 3.4]{vEMS2017}, see Proposition \ref{prop:bV-bilinear-rev} below. 
The bilinear form $b_V$ is non-degenerate \cite[Proposition 3.5]{vEMS2017}. 
It turns out that $U$ has a $D^\perp$-graded set $\{ U^\gamma \mid \gamma \in D^\perp \}$ 
of simple current $U$-modules, where $D^\perp = \{ \alpha \in C \mid b_V(\alpha,D)=0\}$. 
We consider an action of $D$ on $\Irr(W)$ (resp. $D^\perp$ on $\Irr(U)$) defined by 
$X \mapsto W^\beta \boxtimes_W X$ for $\beta \in D$, $X \in \Irr(W)$ 
(resp. $X \mapsto U^\gamma \boxtimes_U X$ for $\gamma \in D^\perp$, $X \in \Irr(U)$). 
We show that the set of $D$-orbits in $\Irr(W)$ and the set of $D^\perp$-orbits in $\Irr(U)$ are 
in one to one correspondence. 
The fusion algebras of $W$ and $U$ are related to each other through the correspondence. 

In the case $W = K(\mathfrak{g},k)$, $V = V_{\sqrt{k}Q_L}$, and 
$U = L_{\widehat{\mathfrak{g}}}(k,0)$,  
Corollary \ref{cor:count} and 
Theorem \ref{thm:fusion_products_4_U_W} 
correspond to \cite[Theorem 5.1]{ADJR2017} and \cite[Theorem 5.2]{ADJR2017}, 
respectively. 
Furthermore, Remark \ref{rem:underSC} is related to 
\cite[Theorem 4.7]{DR2017}.

This paper is organized as follows. 
Section \ref{sec:preliminaries} is devoted to preliminaries. 
We recall basic properties of fusion rules for vertex operator algebras. 
We also review simple current extensions of vertex operator algebras. 
Moreover, we introduce a chain of simple current extensions of special type.
In Section \ref{sec:general}, 
we study a simple current extension $U$ of a tensor product $W \otimes V$ of 
vertex operator algebras $W$ and $V$ in the above-mentioned setting. 
We discuss irreducible $U$-modules as well as irreducible twisted $U$-modules. 
We establish the relationship between 
the fusion rules for $U$ and $W$ and discuss the fusion algebras 
of $U$ and $W$. 

Our notations for vertex operator algebras and their modules are standard 
\cite{FHL1993, LL2004}.
We write $\boxtimes_V$ for the $P(z)$-tensor product $\boxtimes_{P(z)}$ of \cite{HL1994} 
for a vertex operator algebra $V$ with $z = 1$, and call it the fusion product. 
We use $\otimes$ to denote the tensor product of vertex operator algebras 
and their modules as in \cite{FHL1993}.

\section{Preliminaries}\label{sec:preliminaries}

In this section, we collect some basic properties of fusion rules and 
simple currents of vertex operator algebras. 
Moreover, we review simple current extensions of vertex operator algebras 
and their representations, including irreducible twisted modules. 
We consider a chain of simple current extensions as well. 
We assume that 
the vertex operator algebras discussed in this paper  
satisfy the following hypothesis,  unless otherwise specified.

\begin{hypothesis}\label{hypo_on_VOA}
$V$ is a simple, self-dual, rational, and $C_2$-cofinite vertex operator algebra of 
CFT-type.
\end{hypothesis}

\subsection{Fusion rules}\label{subsec:fusion_rule}

Let $V$ be a vertex operator algebra satisfying 
Hypothesis \ref{hypo_on_VOA}.  
Then for $V$-modules $M^i$, $i = 1,2,3$, 
the set $I_V \binom{M^3}{M^1 \ M^2}$ of intertwining operators of type 
$\binom{M^3}{M^1 \ M^2}$ is a finite dimensional vector space 
\cite[Corollary 5.10]{ABD2004}. 
The dimension $N^{M_3}_{M^1, M^2} = N(V)^{M_3}_{M^1, M^2}$ 
of $I_V \binom{M^3}{M^1 \ M^2}$ 
is called the fusion rule. 
A fusion product $M \boxtimes_V N$ over $V$ of any $V$-modules 
$M$ and $N$ exists \cite{HL1995c, Li1998}. 
The fusion product is commutative and associative 
\cite[Theorem 3.7]{Huang2005}. 
Denote by $\Irr(V)$ the set of equivalence classes of irreducible $V$-modules.
Then 
\begin{equation}\label{eq:def_fusion_product}
  M^1 \boxtimes_V M^2 = \sum_{M^3 \in \Irr(V)} N^{M_3}_{M^1, M^2} M^3
\end{equation}
for $M^1, M^2 \in \Irr(V)$. 
A vector space
$R(V)$ with basis $\Irr(V)$ equipped with multiplication $\boxtimes_V$ 
is called the fusion algebra of $V$. 

An irreducible $V$-module $A$ is called a simple current 
if $A \boxtimes_V X$ 
is an irreducible $V$-module for any $X \in \Irr(V)$. 
We denote by $\SC{V}$ the set of equivalence classes of 
simple current $V$-modules. 
A set $\{ A^\alpha \mid \alpha \in D\}$ of simple current $V$-modules indexed by 
an abelian group $D$ is said to be $D$-graded if 
$A^\alpha$, $\alpha \in D$, are inequivalent to each other 
with $A^0 = V$ and
$A^\alpha \boxtimes_V A^\beta = A^{\alpha + \beta}$ for $\alpha, \beta \in D$. 
It was shown in \cite[Corollary 1]{LY2008} that $\SC{V}$ 
is graded by a finite abelian group. 
The inverse of $A \in \SC{V}$ with respect to the fusion product is 
its contragredient module $A'$.

\begin{lemma}\label{lem:Atensor}
  Let $V$ be a vertex operator algebra satisfying 
  Hypothesis \ref{hypo_on_VOA}. 
  Then 
  \begin{equation*}
    \dim I_V \binom{A \boxtimes_V M^3}{A \boxtimes_V M^1 \quad M^2}
    = \dim I_V \binom{A \boxtimes_V M^3}{M^1 \quad A \boxtimes_V M^2}
    = \dim I_V \binom{M^3}{M^1 \quad M^2}
  \end{equation*}
  for $A \in \SC{V}$ and $M^1$, $M^2$, $M^3 \in \Irr(V)$.
\end{lemma}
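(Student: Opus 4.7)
The plan is to reduce all three dimensions to multiplicities inside a common fusion product, and then exploit the fact that $A$ permutes $\Irr(V)$ via the fusion product.

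First I would recall that, under Hypothesis \ref{hypo_on_VOA}, formula \eqref{eq:def_fusion_product} identifies $N^{M^3}_{M^1,M^2}$ with the multiplicity of $M^3$ in the irreducible decomposition of $M^1 \boxtimes_V M^2$. The associativity and commutativity of $\boxtimes_V$ then give the isomorphisms
\begin{equation*}
  (A \boxtimes_V M^1) \boxtimes_V M^2 \;\cong\; A \boxtimes_V (M^1 \boxtimes_V M^2) \;\cong\; M^1 \boxtimes_V (A \boxtimes_V M^2).
\end{equation*}
Writing $M^1 \boxtimes_V M^2 = \sum_{N \in \Irr(V)} N^N_{M^1, M^2}\, N$, we see that each of the three modules above decomposes as $\sum_{N \in \Irr(V)} N^N_{M^1, M^2}\, (A \boxtimes_V N)$.

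Next I would use the simple current property of $A$: each $A \boxtimes_V N$ is irreducible, and the assignment $N \mapsto A \boxtimes_V N$ is a bijection on $\Irr(V)$ with inverse $X \mapsto A' \boxtimes_V X$ (since $A \boxtimes_V A' = V$ by self-duality and the standard fact that the inverse of a simple current is its contragredient). Therefore the displayed sum is already the irreducible decomposition, and the multiplicity of the irreducible module $A \boxtimes_V M^3$ inside each of the three rearrangements equals precisely $N^{M^3}_{M^1, M^2}$.

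Comparing this multiplicity with \eqref{eq:def_fusion_product} applied to $(A \boxtimes_V M^1) \boxtimes_V M^2$ and to $M^1 \boxtimes_V (A \boxtimes_V M^2)$ (using the symmetry of fusion rules in the two lower arguments, which follows from commutativity of $\boxtimes_V$) yields
\begin{equation*}
  N^{A \boxtimes_V M^3}_{A \boxtimes_V M^1,\, M^2} \;=\; N^{A \boxtimes_V M^3}_{M^1,\, A \boxtimes_V M^2} \;=\; N^{M^3}_{M^1, M^2},
\end{equation*}
which is the desired equality of dimensions. The only step with any real content is the bijectivity of $N \mapsto A \boxtimes_V N$ on $\Irr(V)$; everything else is a formal manipulation of fusion products, so I do not expect a serious obstacle.
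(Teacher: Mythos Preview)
Your proof is correct and follows essentially the same route as the paper's: both use commutativity and associativity of $\boxtimes_V$ together with \eqref{eq:def_fusion_product} to identify all three fusion products with $\sum_{N} N^{N}_{M^1,M^2}\,(A \boxtimes_V N)$, and then read off the multiplicity of $A \boxtimes_V M^3$. The paper is simply terser, leaving implicit the point you spell out, namely that $N \mapsto A \boxtimes_V N$ is a bijection on $\Irr(V)$ so that this sum is already an irreducible decomposition.
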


\begin{proof}
Since the fusion product is commutative and associative, 
we have
\begin{equation*}
  (A \boxtimes_V M^1) \boxtimes_V M^2 
  = M^1 \boxtimes_V (A \boxtimes_V M^2)
  = \sum_{M^3 \in \Irr(V)} N^{M_3}_{M^1, M^2} A \boxtimes_V M^3 
\end{equation*}
by \eqref{eq:def_fusion_product}. Thus the assertion holds. 
\end{proof}

The following proposition 
will be used later. 

\begin{proposition} $($\cite[Theorem 2.10]{ADL2005}$)$ 
\label{prop:tensor_prod_fusion}
Let $V^1$ and $V^2$ be rational and $C_2$-cofinite vertex operator algebras,
and let $M^i$ and $N^i$, $i = 1,2,3$, be modules for 
$V^1$ and $V^2$, respectively. 
Then
\begin{equation*}
  I_{V^1 \otimes V^2} \binom{M^3 \otimes N^3}{M^1  \otimes N^1 \quad M^2 \otimes N^2}
  \cong I_{V^1} \binom{M^3}{M^1 \ M^2} \otimes I_{V^2} \binom{N^3}{N^1 \ N^2}
\end{equation*}
as vector spaces.
\end{proposition}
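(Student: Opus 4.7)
The plan is to construct an explicit linear map from the right-hand side to the left-hand side, show it lands in the space of intertwining operators, and then prove it is a bijection. For $\mathcal{Y}_1 \in I_{V^1}\binom{M^3}{M^1 \ M^2}$ and $\mathcal{Y}_2 \in I_{V^2}\binom{N^3}{N^1 \ N^2}$ I would define
\[
  \Phi(\mathcal{Y}_1 \otimes \mathcal{Y}_2)(m \otimes n, z)(m' \otimes n') = \mathcal{Y}_1(m,z) m' \otimes \mathcal{Y}_2(n,z) n'
\]
and extend bilinearly. The intertwining operator axioms for $V^1 \otimes V^2$---lower truncation of the formal series, the $L(-1)$-derivative property (using the decomposition $L(-1) = L^{(1)}(-1) \otimes 1 + 1 \otimes L^{(2)}(-1)$), and the Jacobi identity (using $Y_{V^1 \otimes V^2}(u \otimes v, z) = Y_{V^1}(u,z) \otimes Y_{V^2}(v,z)$)---decouple cleanly into the corresponding axioms for $\mathcal{Y}_1$ and $\mathcal{Y}_2$, so $\Phi$ is well defined.

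Injectivity is straightforward. Given a relation $\sum_i \Phi(\mathcal{Y}_1^{(i)} \otimes \mathcal{Y}_2^{(i)}) = 0$ with the $\mathcal{Y}_1^{(i)}$ linearly independent, evaluating on pure tensors and extracting coefficients of powers of $z$ in $M^3 \otimes N^3$ separates the two tensor slots and forces each $\mathcal{Y}_2^{(i)}$ to vanish.

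The main obstacle is surjectivity. My approach is to reduce it to a K\"unneth-type identity for fusion products, namely
\[
  (M^1 \otimes N^1) \boxtimes_{V^1 \otimes V^2} (M^2 \otimes N^2) \cong (M^1 \boxtimes_{V^1} M^2) \otimes (N^1 \boxtimes_{V^2} N^2),
\]
using the universal property $I_V\binom{X}{M \ N} \cong \Hom_V(M \boxtimes_V N, X)$. Since $V^1$ and $V^2$ are rational and $C_2$-cofinite, so is $V^1 \otimes V^2$, and every irreducible $V^1 \otimes V^2$-module is a tensor product of irreducibles of $V^1$ and $V^2$ (a standard consequence of rationality). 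Applying $\Phi$ to the canonical intertwining operators for the two individual fusion products yields a nonzero $V^1 \otimes V^2$-homomorphism from the left-hand side of the displayed identity to the right-hand side; injectivity of $\Phi$ on each isotypic component, combined with comparison of multiplicities of the simple summands $M^3 \otimes N^3$, upgrades this to an isomorphism. Finally, the Hom-tensor adjunction for completely reducible modules over a tensor product of rational vertex operator algebras converts the fusion-product identity into the claimed fusion-rule identity. The delicate point, requiring care, is the generalized-module framework in which $\boxtimes$ is constructed: one must verify that the map produced by $\Phi$ lands in the fusion product itself rather than in some larger formal completion.
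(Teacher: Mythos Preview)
The paper does not prove this proposition at all; it is quoted with attribution to \cite[Theorem~2.10]{ADL2005} and then used as a black box. So there is no argument in the paper to compare your proposal against, and the relevant comparison is with the original proof in \cite{ADL2005}.

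Your construction of $\Phi$, the verification that it lands in intertwining operators, and the injectivity argument are all standard and correct. The genuine gap is in surjectivity. Your reduction to the K\"unneth identity
\[
  (M^1 \otimes N^1) \boxtimes_{V^1 \otimes V^2} (M^2 \otimes N^2) \;\cong\; (M^1 \boxtimes_{V^1} M^2) \otimes (N^1 \boxtimes_{V^2} N^2)
\]
is not a reduction at all: by the very universal property you invoke, this isomorphism is \emph{equivalent} to the fusion-rule identity you are trying to prove. The step where you ``compare multiplicities of the simple summands $M^3\otimes N^3$'' to upgrade your homomorphism to an isomorphism is therefore circular---the multiplicity of $M^3\otimes N^3$ on the left is $\dim I_{V^1\otimes V^2}\binom{M^3\otimes N^3}{\cdots}$ and on the right is $\dim I_{V^1}\binom{M^3}{\cdots}\cdot\dim I_{V^2}\binom{N^3}{\cdots}$, and their equality is the assertion. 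Concretely, injectivity of $\Phi$ gives the inequality $\dim I_{V^1\otimes V^2}\ge \dim I_{V^1}\cdot\dim I_{V^2}$, and the surjectivity of the map of fusion products you build (which does follow, since the images of the canonical intertwining operators span) gives the \emph{same} inequality again, never the reverse.

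The actual proof in \cite{ADL2005} obtains the missing direction by passing to Zhu-algebra bimodules: under rationality and $C_2$-cofiniteness one has $A(V^1\otimes V^2)\cong A(V^1)\otimes A(V^2)$ and a compatible tensor decomposition of the bimodules $A(M^i\otimes N^i)$, and the Frenkel--Zhu/Li formula expressing fusion rules as dimensions of spaces of bimodule maps on top levels then factors as a tensor product. That is where the hypotheses do real work, and it is a mechanism your sketch does not supply.
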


\subsection{Simple currents}\label{subsec:simple_current_ext}

Let $V$ be a vertex operator algebra satisfying 
Hypothesis \ref{hypo_on_VOA}.  
We denote by $\CC_V$ the category of $V$-modules. 
It is known that $\CC_V$ is a modular tensor category \cite{Huang2008b}, 
see also \cite{Huang1995, Huang2005, Huang2008a, HL1995c} 
and  references therein. 
In fact, let 
\begin{equation}\label{eq:braiding}
  c_{X,Y} = \CC_{P(1)} : X \boxtimes_V Y \to Y \boxtimes_V X
\end{equation}
be as in \cite[Section 1]{Huang2008b}, 
and let
\begin{equation}\label{eq:twist}
  \theta_X = e^{2\pi\sqrt{-1}L(0)} : X \to X
\end{equation}
be as in \cite[Section 4]{Huang2008b}, where $L(0)$ is the Virasoro zero-mode. 
Recall also that $X'$ is the contragredient module of a $V$-module $X$. 

\begin{theorem} $($\cite[Theorem 4.5]{Huang2008b}$)$ 
\label{thm:module_category}
  Let $V$ be a vertex operator algebra satisfying 
  Hypothesis \ref{hypo_on_VOA}. 
  Then the category $\CC_V$ of $V$-modules is a modular tensor category 
  with the tensor product $\boxtimes_V$, the dual $X'$, 
  the braiding $c$, the twist $\theta$, and the unit object being the ajoint module $V$.
\end{theorem}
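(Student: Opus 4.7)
The plan is to assemble, under Hypothesis \ref{hypo_on_VOA}, the five ingredients that together constitute a modular tensor category structure on $\CC_V$: a monoidal structure with associator, a braiding, a twist, duals (rigidity), and non-degeneracy of the $S$-matrix. The abelian-categorical skeleton is immediate from the hypotheses: rationality yields that $\CC_V$ is semisimple with a finite set $\Irr(V)$ of simple objects, CFT-type provides a graded unit object $V$, and self-duality $V' \cong V$ makes the duality theory well-posed.

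The monoidal structure is supplied by the $P(z)$-tensor product of Huang--Lepowsky; the fusion product $\boxtimes_V = \boxtimes_{P(1)}$ is a well-defined bifunctor on $\CC_V$ because $C_2$-cofiniteness ensures finite-dimensionality of spaces of intertwining operators (already used in Section \ref{subsec:fusion_rule}). The associator arises from Huang's theorem on the associativity of products of intertwining operators, the proof of which rests on the convergence of four-point functions and regular-singular-point analysis of the corresponding systems of differential equations. The braiding $c_{X,Y}$ of \eqref{eq:braiding} is constructed by analytic continuation of conformal blocks along a path in the punctured plane, and the hexagon axioms reduce to compatibility of this continuation with the associator. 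The twist $\theta_X = e^{2\pi\sqrt{-1}L(0)}$ of \eqref{eq:twist} is natural because $L(0)$ commutes with $V$-module morphisms, and it is compatible with duality through the standard conjugation formula for $L(0)$ acting on intertwining operators.

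For rigidity, one constructs evaluation and coevaluation morphisms $V \to X \boxtimes_V X'$ and $X' \boxtimes_V X \to V$ out of the canonical pairing provided by self-duality, and then verifies the snake identities using the associators; Huang's argument shows these morphisms are non-degenerate. Non-degeneracy of the $S$-matrix, i.e.\ modularity proper, is the content of the Verlinde formula in the VOA setting: one identifies $S$ with the modular transformation of the space of characters and shows that it diagonalizes the fusion rules, which forces $S$ to be invertible. The main obstacle, absorbing essentially all of the analytic content, is the associativity of intertwining operators; once that is in hand, braiding, rigidity, and the Verlinde formula follow by largely formal manipulations, and $C_2$-cofiniteness is precisely the hypothesis that makes the underlying four-point function analysis tractable.
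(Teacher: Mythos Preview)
The paper does not prove this statement at all: Theorem~\ref{thm:module_category} is quoted verbatim as \cite[Theorem 4.5]{Huang2008b} and used as a black box. There is no ``paper's own proof'' to compare against; your proposal goes well beyond what the paper attempts.

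As a sketch of Huang's argument your outline is broadly accurate, but one point is misleading. You present rigidity as a matter of writing down evaluation and coevaluation from the contragredient pairing and checking snake identities, with non-degeneracy of the $S$-matrix coming afterward from the Verlinde formula. In Huang's actual proof the logical order is essentially reversed: the modular $S$-transformation of characters (available from $C_2$-cofiniteness and rationality via Zhu/DLM) is used as an input to establish rigidity, because showing that the candidate evaluation and coevaluation are nonzero is the hard part and relies on the Verlinde-type identities. Only after rigidity is in hand does one identify the categorical $S$-matrix $\tr_{\CC_V}(M_{X,Y})$ with the modular $S$-matrix and conclude non-degeneracy. So your claim that ``once associativity is in hand, braiding, rigidity, and the Verlinde formula follow by largely formal manipulations'' understates where the depth lies: rigidity is not formal, and it is intertwined with---rather than prior to---the modular data.
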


The twist $\theta$ is a family $\theta_X : X \to X$ of natural isomorphisms  
such that 
\begin{equation*}
  \theta_{X \boxtimes_V Y} = (\theta_X \boxtimes_V \theta_Y) 
  \circ c_{Y,X} \circ c_{X,Y}.
\end{equation*}
By the naturality of $c_{X,Y}$, we have
\begin{equation*}
  \theta_{X \boxtimes_V Y} 
  = c_{Y,X} \circ (\theta_Y \boxtimes_V \theta_X) \circ c_{X,Y} 
  = c_{Y,X} \circ c_{X,Y} \circ (\theta_X \boxtimes_V \theta_Y).
\end{equation*}

Let $M_{X,Y} = c_{Y,X} \circ c_{X,Y} : X \boxtimes_V Y \to X \boxtimes_V Y$ 
be the monodromy isomorphism.
Then 
\begin{equation}\label{eq:monodromy_property}
  \theta_{X \boxtimes_V Y} = M_{X,Y} \circ (\theta_X \boxtimes_V \theta_Y).
\end{equation}

For a $V$-module $X$, we denote its conformal weight by $h(X)$.
It was shown in \cite[Theorem 11.3]{DLM2000} that $h(X)$ 
is a rational number. 
We define $\Q/\Z$-valued maps 
on $\Irr(V)$ and $\SC{V}\times \Irr(V)$ by
\begin{equation}\label{eq:quad}
  \begin{split}
  q_V(X) &= h(X) + \Z,\\
  b_V(A,X) &= h(A \boxtimes_V X) - h(A) - h(X) + \Z
  \end{split}
\end{equation}
for $A \in \SC{V}$ and $X \in \Irr(V)$. 
Then
\begin{equation}\label{eq:bVqV}
  b_V(A,X) = q_V(A \boxtimes_V X) - q_V(A) - q_V(X).
\end{equation}
The maps $q_V$ and $b_V$ were introduced in \cite[Section 3]{vEMS2017} 
in the case where $\SC{V}=\Irr(V)$, 
see also \cite[Section 2]{M2016}.

It follows from \eqref{eq:twist} that 
\begin{equation*}
  \theta_X = e^{2\pi\sqrt{-1}h(X)} \iden_X, 
\end{equation*}
if $X \in \Irr(V)$.
Moreover, we have
\begin{equation}\label{eq:MAX}
  M_{A,X} = e^{2\pi \sqrt{-1} b_V(A,X)} \iden_{A \boxtimes_V X}
\end{equation}
for $A \in \SC{V}$ and $X \in \Irr(V)$ by \eqref{eq:monodromy_property}.

\begin{proposition}\label{prop:bV-bilinear-rev}
  Let $V$ be a vertex operator algebra satisfying 
  Hypothesis \ref{hypo_on_VOA}. 
  Then for $A$, $B\in \SC{V}$ and  $X \in \Irr(V)$, 
  the following assertions hold.

  \textup{(1)} $b_V(A\boxtimes_V B, X) = b_V(A, X)+b_V(B, X)$.

  \textup{(2)} $b_V(A, B\boxtimes_V X) = b_V(A, B)+b_V(A, X)$.

  \textup{(3)} $q_V(A^{\boxtimes n}) = n^2q_V(A)$ for $n\in \Z$.
\end{proposition}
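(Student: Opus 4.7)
The plan is to derive (1) and (2) from the braiding hexagon applied to the monodromy, together with the fact that on irreducible modules the monodromy is a scalar, and then to deduce (3) by induction once we identify $b_V(A,A)$ with $2q_V(A)$.

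For (1), I would compute $M_{A\boxtimes_V B,X}$ on the irreducible module $A\boxtimes_V B\boxtimes_V X$ in two ways. Since $A \boxtimes_V B \in \SC{V}$ and $X \in \Irr(V)$, \eqref{eq:MAX} gives $M_{A\boxtimes_V B,X}=e^{2\pi\sqrt{-1}b_V(A\boxtimes_V B,X)}\iden$. On the other hand, the hexagon axiom in the modular tensor category $\CC_V$ (Theorem \ref{thm:module_category}) lets me write
\begin{equation*}
  c_{A\boxtimes_V B,X}=(c_{A,X}\boxtimes_V \iden_B)\circ(\iden_A\boxtimes_V c_{B,X}),
  \qquad
  c_{X,A\boxtimes_V B}=(\iden_A\boxtimes_V c_{X,B})\circ(c_{X,A}\boxtimes_V \iden_B)
\end{equation*}
(suppressing the associativity isomorphisms). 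Composing and using that $M_{A,X}$ is already a scalar which commutes past $\iden_A\boxtimes_V c_{B,X}$, I get $M_{A\boxtimes_V B,X}=(M_{A,X}\boxtimes_V \iden_B)\circ(\iden_A\boxtimes_V M_{B,X})$, which by \eqref{eq:MAX} equals $e^{2\pi\sqrt{-1}(b_V(A,X)+b_V(B,X))}\iden$. Comparing the two expressions yields (1). Statement (2) is identical in spirit: apply the hexagon on the other side to decompose $M_{A,B\boxtimes_V X}$, pull the scalar $M_{A,X}$ out, and identify what remains as $M_{A,B}\boxtimes_V \iden_X$.

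For (3), the first observation is that $b_V$ is symmetric on $\SC{V}\times\SC{V}$ because \eqref{eq:bVqV} and the commutativity of $\boxtimes_V$ force $b_V(A,B)=b_V(B,A)$. Next I establish the key identity $b_V(A,A)=2q_V(A)$: since $A$ is a simple current, $A\boxtimes_V A'=V$ in $\Irr(V)$, so $q_V(A\boxtimes_V A')=0$ and hence by \eqref{eq:bVqV}, $b_V(A,A')=-q_V(A)-q_V(A')$. But $A'$ has the same conformal weight as $A$, so $q_V(A')=q_V(A)$ and $b_V(A,A')=-2q_V(A)$. On the other hand, applying (2) with $B=A$, $X=A'$ (or equivalently (1)) gives $b_V(A,A)+b_V(A,A')=b_V(A,A\boxtimes_V A')=b_V(A,V)=0$, hence $b_V(A,A)=-b_V(A,A')=2q_V(A)$.

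Now (3) follows by induction on $n\ge 0$: the case $n=0$ is $q_V(V)=0$, and iterating (2) with $B=A$ gives $b_V(A,A^{\boxtimes n})=n\,b_V(A,A)=2n\,q_V(A)$, so
\begin{equation*}
  q_V(A^{\boxtimes(n+1)})=q_V(A)+q_V(A^{\boxtimes n})+b_V(A,A^{\boxtimes n})=q_V(A)+n^2q_V(A)+2nq_V(A)=(n+1)^2 q_V(A).
\end{equation*}
For $n<0$, writing $A^{\boxtimes n}=(A')^{\boxtimes(-n)}$ and using $q_V(A')=q_V(A)$ reduces to the positive case. The main technical obstacle is the hexagon manipulation in (1) and (2): one has to be careful about associators and about justifying that the monodromies $M_{A,X}$, $M_{B,X}$ commute with the remaining morphisms, but this is exactly what scalarity on irreducibles buys us. Everything else is a bookkeeping exercise within the modular tensor category $\CC_V$.
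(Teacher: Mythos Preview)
Your proposal is correct and follows essentially the same route as the paper: parts (1) and (2) are obtained by decomposing the monodromy $M_{A\boxtimes_V B,X}$ (resp.\ $M_{A,B\boxtimes_V X}$) via the hexagon axioms and using that each $M_{A,X}$ acts as the scalar $e^{2\pi\sqrt{-1}b_V(A,X)}$ on an irreducible, exactly as in the paper (which writes the associators out explicitly rather than suppressing them). For (3) the paper invokes bilinearity together with $q_V(V)=0$ and $q_V(A^{\boxtimes(-n)})=q_V(A^{\boxtimes n})$ in one sentence, whereas you unpack the same ingredients into the identity $b_V(A,A)=2q_V(A)$ and an explicit induction; the content is the same.
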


\begin{proof}
Let 
$a_{X,Y,Z} : (X \boxtimes_V Y)  \boxtimes_V Z \to 
X \boxtimes_V (Y  \boxtimes_V Z)$ be the associativity isomorphism. 
For simplicity, we write $\boxtimes$ for the fusion product $\boxtimes_V$. 
By the hexagon axioms for the braided tensor category structure on $\CC_V$,  
we have
\begin{equation}\label{eq:hex-1}
  a_{B,X,A} \circ c_{A, B \boxtimes X} \circ a_{A,B,X} 
  = (\iden_B \boxtimes c_{A,X}) \circ a_{B,A,X} \circ (c_{A,B} \boxtimes \iden_X),
\end{equation}
\begin{equation}\label{eq:hex-2}
a_{X,A,B}^{-1} \circ c_{A \boxtimes B, X} \circ a_{A,B,X}^{-1} 
= (c_{A,X} \boxtimes \iden_B) \circ a_{A,X,B}^{-1} \circ (\iden_A \boxtimes c_{B,X}).
\end{equation}
Replace $A$, $B$, and $X$ with $X$, $A$, and $B$, repectively 
in \eqref{eq:hex-1}. 
Then 
\begin{equation}\label{eq:hex-1'}
a_{A,B,X} \circ c_{X, A \boxtimes B} \circ a_{X,A,B} 
= (\iden_A \boxtimes c_{X,B}) \circ a_{A,X,B} \circ (c_{X,A} \boxtimes \iden_B).
\end{equation}
Similarly, replace $A$, $B$, and $X$ with $B$, $X$, and $A$, respectively 
in \eqref{eq:hex-2}. 
Then 
\begin{equation}\label{eq:hex-2'}
a_{A,B,X}^{-1} \circ c_{B \boxtimes X, A} \circ a_{B,X,A}^{-1} 
= (c_{B,A} \boxtimes \iden_X) \circ a_{B,A,X}^{-1} \circ (\iden_B \boxtimes c_{X,A}).
\end{equation}

The composition of the left hand side of \eqref{eq:hex-2} and 
the left hand side of \eqref{eq:hex-1'} is 
\begin{equation*}
\begin{split}
& a_{A,B,X} \circ c_{X, A \boxtimes B} \circ a_{X,A,B} 
\circ a_{X,A,B}^{-1} \circ c_{A \boxtimes B, X} \circ a_{A,B,X}^{-1}\\
& \qquad = a_{A,B,X} \circ M_{A \boxtimes B, X} \circ a_{A,B,X}^{-1}\\
& \qquad = e^{2\pi\sqrt{-1} b_V(A \boxtimes B, X)}  \iden_{A \boxtimes (B \boxtimes X)}.
\end{split}
\end{equation*}
The composition of the right hand side of \eqref{eq:hex-2} and 
the right hand side of \eqref{eq:hex-1'} is 
\begin{equation*}
\begin{split}
& (\iden_A \boxtimes c_{X,B}) \circ a_{A,X,B} \circ (c_{X,A} \boxtimes \iden_B) 
\circ (c_{A,X} \boxtimes \iden_B) \circ a_{A,X,B}^{-1} \circ (\iden_A \boxtimes c_{B,X})\\
& \qquad = (\iden_A \boxtimes c_{X,B}) \circ a_{A,X,B} \circ (M_{A,X} \boxtimes \iden_B) 
\circ a_{A,X,B}^{-1} \circ (\iden_A \boxtimes c_{B,X})\\
& \qquad = e^{2\pi \sqrt{-1}b_V(A,X)} (\iden_A \boxtimes c_{X,B}) \circ (\iden_A \boxtimes c_{B,X})\\
& \qquad = e^{2\pi \sqrt{-1}b_V(A,X)} (\iden_A \boxtimes M_{B,X})\\
& \qquad = e^{2\pi \sqrt{-1}(b_V(A,X) + b_V(B,X))}
\iden_{A \boxtimes (B \boxtimes X)}.
\end{split}
\end{equation*}
Thus the assertion (1) holds.

The composition of the left hand side of \eqref{eq:hex-1} and 
the left hand side of \eqref{eq:hex-2'} is 
\begin{equation*}
\begin{split}
& a_{A,B,X}^{-1} \circ c_{B \boxtimes X, A} \circ a_{B,X,A}^{-1} 
\circ a_{B,X,A} \circ c_{A, B \boxtimes X} \circ a_{A,B,X}\\
& \qquad = a_{A,B,X}^{-1} \circ M_{A, B \boxtimes X} \circ a_{A,B,X}\\
& \qquad = e^{2\pi\sqrt{-1} b_V(A, B \boxtimes X)} \iden_{(A \boxtimes B) \boxtimes X}.
\end{split}
\end{equation*}
The composition of the right hand side of \eqref{eq:hex-1} and 
the right hand side of \eqref{eq:hex-2'} is 
\begin{equation*}
\begin{split}
& (c_{B,A} \boxtimes \iden_X) \circ a_{B,A,X}^{-1} \circ (\iden_B \boxtimes c_{X,A}) 
\circ (\iden_B \boxtimes c_{A,X}) \circ a_{B,A,X} \circ (c_{A,B} \boxtimes \iden_X)\\
& \qquad = (c_{B,A} \boxtimes \iden_X) \circ a_{B,A,X}^{-1} \circ (\iden_B \boxtimes M_{A,X}) 
\circ a_{B,A,X} \circ (c_{A,B} \boxtimes \iden_X)\\
& \qquad = e^{2\pi\sqrt{-1} b_V(A, X)} (c_{B,A} \boxtimes \iden_X) \circ (c_{A,B} \boxtimes \iden_X)\\
& \qquad = e^{2\pi\sqrt{-1} b_V(A, X)} (M_{A,B} \boxtimes \iden_X)\\
& \qquad = e^{2\pi\sqrt{-1} (b_V(A, B) + b_V(A,X))} \iden_{(A \boxtimes B) \boxtimes X}.
\end{split}
\end{equation*}
Thus the assertion (2) holds.

The above results imply that the restriction of $b_V$ to 
$\SC{V}\times \SC{V}$ is a symmetric $\Z$-bilinear form. 
Since $q_V(V) = 0$, and since 
$q_V(A^{\boxtimes (-n)}) = q_V(A^{\boxtimes n})$ as 
the contragredient module of $A^{\boxtimes n}$ is $A^{\boxtimes (-n)}$, 
the assertion (3) holds for all $n \in \Z$ by \eqref{eq:bVqV}.
\end{proof}

The assertion (3) of the above proposition implies the following corollary, 
see \cite[Theorem 3.4]{vEMS2017}.

\begin{corollary}\label{cor:quad}
  $\SC{V}$ carries a structure of a finite quadratic space with 
  quadratic form $q_V$ and associated bilinear form $b_V$.
\end{corollary}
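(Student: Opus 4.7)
The plan is to assemble the three parts of Proposition \ref{prop:bV-bilinear-rev} together with formula \eqref{eq:bVqV} into the definition of a finite quadratic space. Recall that a finite quadratic space is a finite abelian group $G$ equipped with a map $q : G \to \Q/\Z$ satisfying $q(ng) = n^2 q(g)$ for all $n \in \Z$, $g \in G$, and such that the polarization $(g,h) \mapsto q(g+h) - q(g) - q(h)$ is a $\Z$-bilinear form on $G$. So I need to check (i) that $\SC{V}$ is a finite abelian group under $\boxtimes_V$, (ii) that $q_V$ is quadratic in the sense above, and (iii) that its polarization is precisely $b_V$ restricted to $\SC{V} \times \SC{V}$, which is bilinear.

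First, I would recall that $\SC{V}$ is a finite abelian group under the fusion product, with identity $V$ and inverses given by contragredients (as already noted after Lemma \ref{lem:Atensor}, via \cite[Corollary 1]{LY2008}). This takes care of (i).

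Next I would invoke \eqref{eq:bVqV}, which says
\begin{equation*}
b_V(A, B) = q_V(A \boxtimes_V B) - q_V(A) - q_V(B)
\end{equation*}
for any $A \in \SC{V}$ and $B \in \Irr(V)$; restricting $B$ to $\SC{V}$ identifies the polarization of $q_V$ with $b_V|_{\SC{V} \times \SC{V}}$. Parts (1) and (2) of Proposition \ref{prop:bV-bilinear-rev} give additivity in each argument, and symmetry is automatic from the commutativity of the fusion product (so $q_V(A \boxtimes_V B) = q_V(B \boxtimes_V A)$). Thus $b_V|_{\SC{V}\times \SC{V}}$ is a symmetric $\Z$-bilinear form, settling (iii). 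Finally, part (3) of the proposition is exactly the quadratic identity $q_V(A^{\boxtimes n}) = n^2 q_V(A)$ for all $n \in \Z$, which gives (ii). Combining (i)--(iii) yields the claimed quadratic space structure.

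There is essentially no main obstacle: the proposition has done all the work, and the corollary only reorganizes its content under the standard definition of a finite quadratic space. The single minor point worth flagging is the symmetry of $b_V$ on $\SC{V} \times \SC{V}$, which is not asserted in the proposition but drops out immediately from the commutativity of $\boxtimes_V$ together with \eqref{eq:bVqV}.
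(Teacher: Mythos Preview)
Your proposal is correct and follows the same approach as the paper: the corollary is stated immediately after Proposition \ref{prop:bV-bilinear-rev} with only the remark that assertion (3) implies it, and your write-up simply spells out how the three parts of that proposition together with \eqref{eq:bVqV} assemble into the definition of a finite quadratic space. The symmetry point you flag is already noted at the end of the paper's proof of Proposition \ref{prop:bV-bilinear-rev}, so there is nothing genuinely new to add.
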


We also have the next proposition.

\begin{proposition}\label{prop:nondegeneracy-rev}
  Let $V$ be a vertex operator algebra satisfying 
  Hypothesis \ref{hypo_on_VOA}. 
  If $A \in \SC{V}$ satisfies the condition that $b_V(A,X) = b_V(V,X)$ 
  for all $X \in \Irr(V)$, then $A = V$.
\end{proposition}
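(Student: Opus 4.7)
The plan is to reduce the statement to the non-degeneracy of the modular tensor category $\CC_V$ given in Theorem \ref{thm:module_category}, via the identification of $b_V$ with monodromy phases from \eqref{eq:MAX}.

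First, I would observe that $V \boxtimes_V X \cong X$ and $h(V) = 0$ since $V$ is of CFT-type, so the definition \eqref{eq:quad} gives $b_V(V, X) = h(X) - h(X) = 0$ for every $X \in \Irr(V)$. The hypothesis therefore reads $b_V(A, X) = 0$ for all $X \in \Irr(V)$, and \eqref{eq:MAX} then yields
\begin{equation*}
  M_{A, X} = \iden_{A \boxtimes_V X} \qquad \text{for every } X \in \Irr(V).
\end{equation*}
By rationality, every object of $\CC_V$ decomposes as a direct sum of irreducibles, so naturality of the braiding gives $M_{A, Y} = \iden_{A \boxtimes_V Y}$ for \emph{every} $Y \in \CC_V$. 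In categorical language, $A$ lies in the Müger center of $\CC_V$.

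Next, I would invoke modularity. By Theorem \ref{thm:module_category}, $\CC_V$ is a modular tensor category, and a standard equivalent formulation of modularity (Müger's theorem) states that the Müger center is trivial: every transparent object is isomorphic to a direct sum of copies of the unit $V$. Since $A$ is a simple current, it is irreducible, so $A \cong V$.

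If one prefers to avoid Müger's theorem, the same conclusion follows directly from the $S$-matrix formulation of modularity: for simple objects one has $s_{A, X} = e^{2\pi\sqrt{-1}\, b_V(A, X)} \qdim(A)\qdim(X)$, and since $\qdim(A) = 1$ for a simple current, the hypothesis forces the row $s_{A, -}$ to coincide with $s_{V, -} = \qdim(-)$; invertibility of the $S$-matrix then forces $A \cong V$. The only real step is the appeal to this non-degeneracy; no intricate computation beyond $b_V(V, X) = 0$ and \eqref{eq:MAX} is needed, and the substantive input is modularity itself, which is already available from Theorem \ref{thm:module_category}.
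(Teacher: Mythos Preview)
Your argument is correct and essentially matches the paper's: the paper computes $s_{AX} = \tr_{\CC_V}(M_{A,X}) = e^{2\pi\sqrt{-1}b_V(A,X)}\dim_{\CC_V}(A)\dim_{\CC_V}(X)$ and uses non-degeneracy of the $S$-matrix to conclude that the $A$-row, being a scalar multiple of the $V$-row, forces $A = V$ --- exactly your alternative formulation via $S$ (your M\"uger-center phrasing is the equivalent categorical restatement). One small caveat: since the paper deliberately avoids the positivity assumption of Remark~\ref{rmk:with_positivity}, the relevant quantity is the categorical dimension $\dim_{\CC_V}(A)$ (which is $\pm 1$ for a simple current) rather than the DJX quantum dimension, though this does not affect the proportionality argument.
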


\begin{proof}
Consider the categorical trace $\tr_{\CC_V}(M_{A,X})$ of $M_{A,X}$ 
associated with the twist $\theta$. 
We have
\[
  \tr_{\CC_V}(M_{A,X}) = e^{2\pi \sqrt{-1} b_V(A,X)} 
  \tr_{\CC_V}(\iden_A) \tr_{\CC_V}(\iden_X)
\]
by \eqref{eq:MAX}. 
Since the category $\CC_V$ of $V$-modules is a modular tensor category 
by Theorem \ref{thm:module_category}, 
the $S$-matrix $S = (s_{XY})_{X,Y \in \Irr(V)}$ with $s_{XY} = \tr_{\CC_V}(M_{X,Y})$ 
is non-degenerate. 
If $b_V(A,X) = b_V(V,X)$ for all $X \in \Irr(V)$, 
then the $A$-th row of $S$ is a constant multiple of the $V$-th row of $S$.  
Hence $A = V$.
\end{proof}

In the case $\SC{V} = \Irr(V)$, we have the following corollary, 
see \cite[Proposition 3.5]{vEMS2017}.

\begin{corollary}\label{cor:bilinear}
  The map $b_V$ is a non-degenerate symmetric $\Z$-bilinear form on $\Irr(V)$ 
  if $\SC{V} = \Irr(V)$.
\end{corollary}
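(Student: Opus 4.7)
The plan is to obtain the three properties (symmetry, bilinearity, non-degeneracy) as direct consequences of the preceding results, with essentially no new work beyond identifying how the hypothesis $\SC{V}=\Irr(V)$ feeds into them.

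First I would handle bilinearity and symmetry. Under the hypothesis $\SC{V}=\Irr(V)$, every argument of $b_V$ may be regarded simultaneously as a simple current and as an irreducible module. Thus Proposition \ref{prop:bV-bilinear-rev}(1) gives linearity in the first argument, and (2) gives linearity in the second argument; the symmetry is built into the proof of Proposition \ref{prop:bV-bilinear-rev} itself, which established that the restriction of $b_V$ to $\SC{V}\times\SC{V}$ is a symmetric $\Z$-bilinear form. So in this case $b_V$ is a symmetric $\Z$-bilinear form on $\Irr(V)=\SC{V}$.

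Next I would verify non-degeneracy, which is where Proposition \ref{prop:nondegeneracy-rev} does all the work. The key observation is that $b_V(V,X)=0$ for every $X\in\Irr(V)$, since $V\boxtimes_V X\cong X$ forces $h(V\boxtimes_V X)-h(V)-h(X)\in\Z$. Suppose $A\in\Irr(V)=\SC{V}$ lies in the radical, i.e.\ $b_V(A,X)=0$ for all $X\in\Irr(V)$. Then $b_V(A,X)=b_V(V,X)$ for all $X$, and Proposition \ref{prop:nondegeneracy-rev} immediately yields $A=V$, which is the zero element of the group $\SC{V}$. Hence $b_V$ is non-degenerate.

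There is no real obstacle: the hard work is concentrated in Proposition \ref{prop:nondegeneracy-rev}, where the modularity of $\CC_V$ (through the non-degeneracy of the $S$-matrix) was invoked. The corollary is essentially a bookkeeping statement translating those earlier results into the language of a non-degenerate symmetric bilinear form on $\Irr(V)$ once $\Irr(V)$ and $\SC{V}$ coincide.
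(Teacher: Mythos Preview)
Your proposal is correct and matches the paper's intended argument: the corollary is stated without proof in the paper precisely because it follows immediately from Proposition~\ref{prop:bV-bilinear-rev} (symmetry and bilinearity) together with Proposition~\ref{prop:nondegeneracy-rev} (non-degeneracy), exactly as you have spelled out.
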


Suppose $\SC{V}=\{ V^\alpha \mid \alpha \in C\}$ is graded 
by a finite abelian group $C$ with $V=V^0$ 
and $V^\alpha\boxtimes_V V^\beta = V^{\alpha+\beta}$ 
for $\alpha$, $\beta\in C$. 
We regard $q_V$ as a quadratic form on $C$ by setting 
$q_V(\alpha)=q_V(V^\alpha)$. 
Likewise, we set $b_V(\alpha,\beta) = b_V(V^\alpha,V^\beta)$. 
For any subgroup $D$ of $C$, we obtain a $D$-graded 
set $\{ V^\alpha \mid\alpha \in D \}$ 
of simple current $V$-modules.
Set
\begin{equation*}
  D^\perp = \{ \alpha \in C \mid b_V(\alpha,D)=0\}.
\end{equation*}

Then $D^\perp$ is a subgroup of $C$.
We say that $\alpha \in C$ is isotropic if $q_V(\alpha)=0$,  
and that $D$ is totally isotropic if $q_V(D)=0$. 
Note that  $D\subset D^\perp$ if $D$ is totally isotropic, 
but the converse is not generally true 
as $b_V(\alpha,\alpha)=2q_V(\alpha)$. 
If $\SC{V} = \Irr(V)$, then we have $\abs{C} = \abs{D} \abs{D^\perp}$ 
as $b_V$ is non-degenerate by Corollary \ref{cor:bilinear}.

Let $D$ be a totally isotropic subgroup of $C$. 
Then the direct sum
\[
  V_D = \bigoplus_{\alpha \in D}V^\alpha
\]
is closed under the fusion product, 
and $V_D$ is of integral weight. 
The algebraic structure of $V_D$ has been studied for many years, 
see for example \cite{Carnahan2014, CKL2015, CKM2017, vEMS2017, Yamauchi2004} 
and references therein. 
Here we cite \cite[Theorem 3.12]{CKL2015}, 
see also \cite[Theorem 3.9]{CKL2015}.

\begin{theorem} $($\cite[Theorem 3.12]{CKL2015}$)$ \label{thm:VOSA}
  Let $V$ be a vertex operator algebra satisfying Hypothesis \ref{hypo_on_VOA}. 
  Suppose $\SC{V}=\{ V^\alpha \mid \alpha \in C\}$ is graded 
  by a finite abelian group $C$. 
  Let $V_D = \bigoplus_{\alpha \in D}V^\alpha$ 
  for a totally isotropic subgroup $D$ of $C$. 
  Then the direct sum $V_D = \bigoplus_{\alpha \in D}V^\alpha$ has either
  a simple vertex operator algebra structure 
  or a simple vertex operator superalgebra structure, 
  which extends the $V$-module structure on $V_D$.
\end{theorem}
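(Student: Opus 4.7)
The plan is to invoke the Huang--Kirillov--Lepowsky correspondence (\cite{HKL2015}, with its super analog used in \cite{CKL2015}) between simple VOA (resp. VOSA) extensions of $V$ and commutative (resp. super-commutative) associative unital algebra objects $A$ in $\CC_V$ satisfying $\theta_A = \iden_A$. By Theorem \ref{thm:module_category}, $\CC_V$ is a modular tensor category, so it suffices to equip $V_D = \bigoplus_{\alpha \in D} V^\alpha$ with such an algebra structure.

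Total isotropy immediately gives the trivial twist: for every $\alpha \in D$, $q_V(\alpha)=0$ forces $h(V^\alpha) \in \Z$, hence $\theta_{V^\alpha}=\iden_{V^\alpha}$ by \eqref{eq:twist}, and thus $\theta_{V_D}=\iden_{V_D}$. Likewise, Proposition \ref{prop:bV-bilinear-rev} shows $b_V(\alpha,\beta) = q_V(\alpha+\beta) - q_V(\alpha) - q_V(\beta) = 0$ in $\Q/\Z$ for $\alpha,\beta \in D$, so \eqref{eq:MAX} gives $M_{V^\alpha,V^\beta} = \iden$. The full braided subcategory generated by $\{V^\alpha : \alpha \in D\}$ is therefore symmetric. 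Since each $V^\alpha$ is a simple current, $\dim \Hom_{\CC_V}(V^\alpha \boxtimes_V V^\beta, V^{\alpha+\beta}) = 1$, so I would choose nonzero $\mu_{\alpha,\beta}$ in each such space and assemble them into a candidate multiplication $\mu : V_D \boxtimes_V V_D \to V_D$, with unit $\iota : V = V^0 \hookrightarrow V_D$.

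It remains to arrange strict associativity while preserving a symmetric sign braiding. The subcategory in play is pointed, so by the Eilenberg--MacLane classification its associator--braiding data are encoded by the abelian $3$-cocycle attached to $q_V|_D$, which vanishes by total isotropy. Consequently the pentagon obstruction $[\omega] \in H^3(D,\C^\times)$ produced by the chosen $\mu_{\alpha,\beta}$ is a coboundary and may be killed by rescaling the $\mu_{\alpha,\beta}$, while the residual braiding scalars $\eta(\alpha,\beta) \in \C^\times$ normalize to a $\{\pm 1\}$-valued bicharacter on $D$ (using $\eta(\alpha,\beta)\eta(\beta,\alpha)=1$ derived from triviality of $M_{V^\alpha,V^\beta}$). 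This produces a commutative (if the bicharacter is trivial) or super-commutative associative algebra structure $(V_D,\mu,\iota)$ in $\CC_V$, and \cite{HKL2015} (or its super version) converts it into a simple VOA or VOSA extension; simplicity follows from simplicity of $V^0 = V$ together with the fact that $V_D$ is generated over $V$ by the images of the $\mu_{\alpha,\beta}$.

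The main obstacle is this final cohomological step: although each relevant $\Hom$ space is one-dimensional, enforcing a strictly commuting pentagon (rather than commutativity merely up to a nontrivial scalar $3$-cocycle) requires the global input of Eilenberg--MacLane theory for pointed braided fusion categories. That theory identifies the obstruction precisely with the quadratic form $q_V|_D$, which is why total isotropy is the correct hypothesis; the dichotomy between the VOA and VOSA conclusion then reflects the two possible values of the residual sign bicharacter.
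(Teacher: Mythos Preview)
The paper does not prove this theorem; it is quoted verbatim from \cite[Theorem 3.12]{CKL2015} without argument. Your sketch is essentially a reconstruction of the approach taken there (the Huang--Kirillov--Lepowsky correspondence combined with the Eilenberg--MacLane classification of pointed braided categories), so there is nothing in the present paper to compare against.

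One point in your outline needs correction. You assert that ``the abelian $3$-cocycle attached to $q_V|_D$ vanishes by total isotropy,'' and from this deduce that the pentagon obstruction $[\omega]$ is a coboundary. But the quadratic form that classifies the abelian $3$-cocycle of the pointed subcategory is the \emph{self-braiding} $Q(\alpha)=c_{V^\alpha,V^\alpha}\in\C^\times$, not $q_V(\alpha)=h(V^\alpha)+\Z$. Total isotropy of $q_V|_D$ only yields $Q(\alpha)^2=M_{V^\alpha,V^\alpha}=1$ via \eqref{eq:MAX}, so $Q$ is $\{\pm 1\}$-valued, not necessarily trivial. Indeed, if the abelian $3$-cocycle genuinely vanished you could trivialize the braiding as well and would \emph{always} obtain a VOA, contradicting the dichotomy in the statement. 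The correct step is: a $\{\pm 1\}$-valued $Q$ forces the pointed braided subcategory to be symmetric (Tannakian or super-Tannakian), and in either case the associator can be trivialized while the braiding remains the sign bicharacter determined by $Q$---equivalently by the categorical dimensions $\dim_{\CC_V}V^\alpha\in\{\pm 1\}$ (cf.\ Remark~\ref{rmk:with_positivity}, where the extra positivity hypothesis pins these dimensions to $+1$ and rules out the VOSA case). With that adjustment your outline goes through.
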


\begin{remark}\label{rmk:with_positivity}
  Let $V$ be a vertex operator algebra satisfying Hypothesis \ref{hypo_on_VOA}.  
  Assume further that the conformal weight of any irreducible $V$-module 
  is positive except for the adjoint module $V$. 
  Then for any irreducible $V$-module $X$, the categorical dimension 
  $\dim_{\CC_V} X = \tr_{\CC_V}(\iden_X)$ of $X$ 
  coincides with the quantum dimension 
  $\qdim X$ of $X$ in the sense of \cite[Definition 3.1]{DJX2013} by 
  \cite[Eq. (4.1)]{DJX2013}, see also \cite[Proposition 3.11]{DLN2015}. 
  This in particular implies that $\dim_{\CC_V} X \ge 1$. 
  In this case, the direct sum $V_D = \bigoplus_{\alpha \in D}V^\alpha$ 
  has a vertex operator algebra structure, 
  which extends the $V$-module structure on $V_D$, 
  see also \cite[Theorem 3.2.12]{Carnahan2014} and \cite[Theorem 4.2]{vEMS2017}.
\end{remark}

If the direct sum $V_D = \bigoplus_{\alpha \in D}V^\alpha$ 
associated with a totally isotropic subgroup $D$ of $C$ has a structure of 
a vertex operator algebra which extends the $V$-module structure 
on $V_D$, then such a vertex operator algebra structure on $V_D$ is unique 
\cite[Proposition 5.3]{DM2004}. 
The vertex operator algebra $V_D$ is called 
a $D$-graded simple current extension of $V=V^0$. 
It is known that $V_D$ also satisfies Hypothesis \ref{hypo_on_VOA}, 
see \cite[Theorem 2.14]{Yamauchi2004}. 
Indeed, since the contragredient $V$-module $(V^\alpha)'$ of $V^\alpha$ is 
$V^{-\alpha}$, the contragredient $V_D$-module of the adjoint 
module $V_D$ is $\bigoplus_{\alpha \in D}V^{-\alpha} = V_D$ 
by the uniqueness of simple current extension. 
Thus $V_D$ is self-dual. 

The character group $D^* = \Hom(D,\C^\times)$ of $D$ 
naturally acts on $V_D = \bigoplus_{\alpha \in D} V^\alpha$.
In fact, for $\chi \in D^*$ a scalar multiplication by 
$\chi(\alpha)$ on $V^\alpha$, $\alpha \in D$, 
is an automorphism of the vertex oerator algebra $V_D$. 
Thus we can regard $D^*$ as a subgroup of $\Aut(V_D)$. 
The fixed point subalgebra of $V_D$ by $D^\ast$ is 
$(V_D)^{D^\ast} = V$.
If an automorphism $g$ of $V_D$ acts trivially on $V$, 
then $g$ agrees with some $\chi \in D^\ast$. 

The group $D$ acts on the set $\Irr(V)$ by $X \mapsto V^\alpha \boxtimes_V X$
for $\alpha \in D$ and $X \in \Irr(V)$.
Let $\mathcal{O} \subset \Irr(V)$ be a $D$-orbit, 
and take $X \in \mathcal{O}$.
It follows from (1) of Proposition \ref{prop:bV-bilinear-rev} that the map
\begin{equation*}
  \xi_X : D \to \Q/\Z; \quad \alpha \mapsto b_V(V^\alpha,X)
\end{equation*}
is a group homomorphism.
Since $D\subset D^\perp$, it follows from (2) of 
Proposition \ref{prop:bV-bilinear-rev} that 
this map is independent of the choice of $X \in \mathcal{O}$, 
and we can denote $\xi_X$ by $\xi_{\mathcal{O}}$.

By exponentiation we obtain a linear character 
\begin{equation*}
  \widehat{\xi}_X(\alpha) 
  = \exp(2\pi\sqrt{-1}\, \xi_X(\alpha)) \in \C^\times
\end{equation*}
of $D$.
Therefore, each $X \in \Irr(V)$, 
as well as a $D$-orbit $\mathcal{O}$ in $\Irr(V)$, 
defines an automorphism $\widehat{\xi}_X = \widehat{\xi}_{\mathcal{O}}$ 
of the vertex operator algebra $V_D$.
If $D$ acts on $\mathcal{O}$ freely, it is known that 
the $D$-orbit $\mathcal{O}$ 
uniquely defines an irreducible twisted $V_D$-module by 
\cite[Theorem 3.3]{Yamauchi2004}. 
In fact, the following theorem holds.

\begin{theorem}\label{thm:induced_module} 
  Let $V$ be a vertex operator algebra satisfying Hypothesis \ref{hypo_on_VOA},  
  and let
  $\SC{V} = \{ V^\alpha \mid \alpha \in C\}$ be the set of equivalence classes 
  of simple current $V$-modules graded by $C$.
  Let $V_D$ be a simple current extension of $V$ associated with 
  a totally isotropic subgroup $D$ of $C$.
  Let $X \in \Irr(V)$, and 
  let $\mathcal{O } = \{ V^\alpha \boxtimes_V X \mid \alpha \in D\}$ be its 
  $D$-orbit in $\Irr(V)$.
  Suppose $D$ acts on $\mathcal{O}$ freely.
  Then there exists a unique structure of an irreducible 
  $\widehat{\xi}_X$-twisted $V_D$-module on the direct sum 
  $V_D\boxtimes_V X = \bigoplus_{\alpha \in D} V^\alpha \boxtimes_V X$ 
  of irreducible $V$-modules which extends the $V$-module structure on $X$.
\end{theorem}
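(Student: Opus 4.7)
The plan is to assemble the twisted vertex operator on $V_D\boxtimes_V X$ out of canonical intertwining operators supplied by the simple current structure, and to read off the twisting automorphism from the monodromy formula \eqref{eq:MAX}. Since each $V^\alpha$ is a simple current, every fusion product $V^\alpha\boxtimes_V(V^\beta\boxtimes_V X)=V^{\alpha+\beta}\boxtimes_V X$ is irreducible, and Lemma \ref{lem:Atensor} gives
$\dim I_V\binom{V^{\alpha+\beta}\boxtimes_V X}{V^\alpha\ V^\beta\boxtimes_V X}=1$
for all $\alpha,\beta\in D$. For each ordered pair $(\alpha,\beta)$ I would pick a nonzero intertwining operator $\CY_{\alpha,\beta}$ in this one-dimensional space, normalizing so that $\CY_{0,\beta}$ is the given $V$-module action on $V^\beta\boxtimes_V X$, and define
\begin{equation*}
Y^X(v,z)w=\CY_{\alpha,\beta}(v,z)w,\qquad v\in V^\alpha,\ w\in V^\beta\boxtimes_V X,
\end{equation*}
extended linearly to $V_D\otimes(V_D\boxtimes_V X)$.

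Next I would verify that $Y^X(v,z)$ for $v\in V^\alpha$ has the correct $z$-expansion for a $\widehat{\xi}_X$-twisted action. The powers of $z$ occurring in $\CY_{\alpha,\beta}(v,z)w$ lie in the coset $h(V^{\alpha+\beta}\boxtimes_V X)-h(V^\alpha)-h(V^\beta\boxtimes_V X)+\Z=b_V(V^\alpha,V^\beta\boxtimes_V X)+\Z$. By Proposition \ref{prop:bV-bilinear-rev}(2) this equals $b_V(V^\alpha,V^\beta)+b_V(V^\alpha,X)+\Z$, and $b_V(V^\alpha,V^\beta)=0$ because $D$ is totally isotropic. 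Hence the $z$-powers lie precisely in $b_V(V^\alpha,X)+\Z$, which is the coset forced by $\widehat{\xi}_X(\alpha)=e^{2\pi\sqrt{-1}\,b_V(V^\alpha,X)}$ for a $\widehat{\xi}_X$-twisted operator.

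The heart of the proof is the twisted Jacobi identity, which I would deduce from the associativity and commutativity of intertwining operators in the modular tensor category $\CC_V$ (Theorem \ref{thm:module_category}): the iterated compositions $\CY_{\alpha,\beta+\gamma}(\cdot,z_1)\CY_{\beta,\gamma}(\cdot,z_2)$ and $\CY_{\alpha+\beta,\gamma}(Y_{V_D}(\cdot,z_1-z_2)\cdot,z_2)$ both lie in a one-dimensional space of iterated intertwiners and so agree up to a scalar depending only on $(\alpha,\beta)$. The commutativity monodromy between two $V_D$-entries $V^\alpha,V^\beta$ contributes the factor $e^{2\pi\sqrt{-1}\,b_V(V^\alpha,V^\beta)}=1$ by \eqref{eq:MAX} and total isotropy, so no extra braiding appears internally to $V_D$; the only surviving twist is that between $V_D$ and $X$, giving the $\widehat{\xi}_X$-twist. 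The main obstacle is to normalize the family $\{\CY_{\alpha,\beta}\}$ coherently so that the resulting cocycle matches the one already furnished by $Y_{V_D}$ on $V_D$, rather than yielding only a projective twisted representation. This is precisely where the assumption that $V_D$ carries a genuine vertex operator algebra structure is used decisively: the multiplication on $V_D$ supplies a consistent cocycle, and the uniqueness of simple current VOA extensions (\cite[Proposition 5.3]{DM2004}) forces a compatible choice of normalizations to exist.

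Uniqueness of the twisted $V_D$-module structure then follows from one-dimensionality of the intertwining spaces together with the fixed normalization $\CY_{0,\beta}=$ the given $V$-action: every remaining $\CY_{\alpha,\beta}$ is then pinned down by the twisted Jacobi identity. For irreducibility, any nonzero $\widehat{\xi}_X$-twisted $V_D$-submodule $N$ meets some summand $V^\beta\boxtimes_V X$ nontrivially and, by irreducibility of that summand as a $V$-module, contains it entirely. The action of each $V^\alpha\subset V_D$ on this copy is nonzero and lands in $V^{\alpha+\beta}\boxtimes_V X$, which by freeness of the $D$-action on $\mathcal{O}$ is a distinct summand of $V_D\boxtimes_V X$. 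Therefore $N$ contains every summand, so $N=V_D\boxtimes_V X$.
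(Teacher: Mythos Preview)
The paper does not supply its own proof of this theorem; it is quoted as a known result, attributed to \cite[Theorem 3.3]{Yamauchi2004} in the paragraph immediately preceding the statement. Your sketch is essentially the classical construction carried out in that reference: build the twisted vertex operator from the one-dimensional intertwining spaces furnished by the simple current property, compute the $z$-grading via $b_V$, and invoke associativity and braiding in $\CC_V$ to obtain the twisted Jacobi identity.

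The outline is correct, and your identification of the twist via \eqref{eq:MAX} and Proposition \ref{prop:bV-bilinear-rev} is exactly right. The one place where the argument is not yet a proof is the cocycle normalization. As you note, associativity of the $\CY_{\alpha,\beta}$ holds a priori only up to nonzero scalars $f(\alpha,\beta,\gamma)$, and the question is whether one can rescale the $\CY_{\alpha,\beta}$ so that all these scalars become $1$. Your appeal to the uniqueness statement \cite[Proposition 5.3]{DM2004} does not address this: that proposition concerns the algebra $V_D$, not its modules. What is actually used (in \cite{Yamauchi2004}) is that the pentagon axiom in $\CC_V$ forces $f$ to be a $2$-cocycle on $D$ with values in $\C^\times$, and the existence of a genuine (non-projective) vertex operator algebra structure on $V_D$ means that the corresponding algebra cocycle is a coboundary; comparing the two via the associator shows the module cocycle is a coboundary as well, and freeness of the $D$-action on $\mathcal{O}$ is what allows the rescaling to be carried out unambiguously on the summands. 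This is the step that needs to be written out, not merely indicated. Your irreducibility argument is fine; for uniqueness you should argue that two extending structures differ by a family of scalars $(\lambda_\alpha)_{\alpha\in D}$ on the summands with $\lambda_0=1$, and that the Jacobi identity forces $\lambda_{\alpha+\beta}=\lambda_\alpha\lambda_\beta$, hence all $\lambda_\alpha$ are determined up to a character of $D$---which is absorbed by the standing convention (noted just after the theorem) on $g$- versus $g^{-1}$-twisted modules, or eliminated by a module isomorphism.
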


As to the notion of a $g$-twisted module for a vertex operator algebra 
with respect to its automorphism $g$, 
we adopt the definition in \cite{DLM2000}. 
Thus a $g$-twisted module in \cite{Yamauchi2004} means 
a $g^{-1}$-twisted  module in this paper.

The theory of vertex operator algebra extensions has been developed in more general 
contexts than here, see for exmple \cite{CKL2015, CKM2017, HKL2015, KO2002}. 
We apply the general theory of vertex operator algebra extensions to our case as follows: 
$V$ is a vertex operator algebra satisfying Hypothesis \ref{hypo_on_VOA},  
$\CC = \CC_V$ is the category of $V$-modules, 
$A = V_e$ is the simple current extension $V_D$ of $V$ 
associated with a totally isotropic subgroup $D$ of $C$, 
and $\Rep^0 V_D$ is the category of $V_D$-modules, 
see \cite[Section 3]{CKL2015}, \cite[Section 3]{CKM2017}, \cite[Section 3]{HKL2015}, 
and \cite[Section 5]{KO2002}. 

Indeed, the simple current extension $V_D$ of $V$  
is a commutative associative algebra $A = V_e$ defined as in \cite[Definition 1.1]{KO2002} 
(see also \cite[Definition 2.2]{CKM2017}, \cite[Definition 3.1]{HKL2015})
in the braided tensor category $\CC_V$ of $V$-modules 
\cite[Theorem 3.2, Remark 3.3]{HKL2015}. 
A category $\Rep A = \Rep V_D$ is introduced in \cite[Definition 1.2]{KO2002},
which a $\C$-linear abelian monoidal category 
\cite[Lemma 1.4, Theorem 1.5]{KO2002}, 
see also \cite[Theorems 2.9 and 2.53 ]{CKM2017}. 
A functor
\[
  F : \CC_V \to \Rep V_D; \quad X \mapsto V_D \boxtimes_V X
\]
is defined, and it is shown that $F$ is a tensor functor \cite[Theorem 1.6]{KO2002}, 
see also \cite[Theorem 2.59]{CKM2017}. 
A full subcategory $\Rep^0 A$ of $\Rep A$ is introduced in \cite[Definition 1.8]{KO2002}. 
Its properties in the general theory are studied in \cite{CKM2017, KO2002}. 
In our case $\Rep^0 A = \Rep^0 V_D$ is the braided tensor category of 
$V_D$-modules in $\CC_V$ \cite[Theorem 3.4]{HKL2015}.

As in \cite[Definition 2.66]{CKM2017}, 
we denote by $\CC_V^0$ the full subcategory of $\CC_V$ consisting of 
the objects $X$ of $\CC_V$ such that $F(X)$ is an object of $\Rep^0 V_D$. 
A necessary and sufficient condition on an object $X$ of $\CC_V$ for which $F(X)$ 
is an object of $\Rep^0 V_D$ can be found in \cite[Proposition 2.65]{CKM2017}. 
We will use \cite[Theorem 2.67]{CKM2017} in the following form later.

\begin{theorem} $($\cite[Theorem 2.67]{CKM2017}$)$ \label{thm:functor_F}
  The category $\CC_V^0$ is a $\C$-linear additive braided monoidal category 
  with structures induced from $\CC_V$, 
  and the restriction $F : \CC_V^0 \to \Rep^0 V_D$ of the functor $F$ to $\CC_V^0$ is 
  a braided tensor functor.
\end{theorem}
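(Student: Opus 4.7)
The plan is to verify two things in turn: first that $\CC_V^0$ is closed in $\CC_V$ under finite direct sums, the tensor product $\boxtimes_V$, and the braiding $c$, so that it inherits the structure of a $\C$-linear additive braided monoidal category with unit $V$; second that the restricted functor $F$ intertwines each of these structures with their counterparts in $\Rep^0 V_D$, making it a braided tensor functor.

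For the closure statement I would invoke the characterization recalled in \cite[Proposition 2.65]{CKM2017}: an object $X$ of $\CC_V$ lies in $\CC_V^0$ precisely when the double braiding (monodromy) $M_{V_D, X} = c_{X, V_D} \circ c_{V_D, X}$ equals the identity of $V_D \boxtimes_V X$, equivalently when $F(X)$ is local with respect to $V_D$. Closure under finite direct sums is immediate from the additivity of the braiding. Closure under $\boxtimes_V$ is obtained by expanding the monodromy of $V_D$ with $X \boxtimes_V Y$ via the hexagon axioms (the identities \eqref{eq:hex-1}, \eqref{eq:hex-2}, \eqref{eq:hex-1'}, \eqref{eq:hex-2'} illustrate the kind of manipulation required) and then applying locality of $X$ and of $Y$ separately. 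The unit $V$ is visibly local, and the associator and unit constraints restrict without change.

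For the functor statement, $F$ has already been promoted to a tensor functor $\CC_V \to \Rep V_D$ in \cite[Theorem 2.59]{CKM2017}, with tensor structure morphisms $F(X) \boxtimes_{V_D} F(Y) \cong F(X \boxtimes_V Y)$ induced by the multiplication $\mu : V_D \boxtimes_V V_D \to V_D$. By the very definition of $\CC_V^0$, the restriction lands in $\Rep^0 V_D$, and the tensor structure morphisms descend. It then remains to show that $F$ intertwines the braidings, i.e.\ that the braiding $c^{V_D}_{F(X), F(Y)}$ on $\Rep^0 V_D$ corresponds to $F(c_{X,Y})$ under the tensor structure isomorphism. The braiding on $\Rep^0 V_D$ is constructed from that of $\CC_V$ through the coequalizer defining $\boxtimes_{V_D}$, and this construction is well-posed precisely on local objects, where the ambiguity caused by the monodromy with $V_D$ vanishes. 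The compatibility is then a diagram chase combining the hexagon axioms with the locality of $X$ and of $Y$.

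The main obstacle is this final braiding compatibility. The remaining structural axioms (pentagon, triangle, additivity, unit constraints) transfer from $\CC_V$ to $\CC_V^0$ and are preserved by $F$ as soon as closure is established, but the braiding requires the locality hypothesis even to be well-defined on $\Rep^0 V_D$; this is the conceptual reason for singling out the subcategory $\CC_V^0$. The argument is the general categorical statement of \cite[Theorem 2.67]{CKM2017}, whose proof applies in our vertex algebraic setting without modification.
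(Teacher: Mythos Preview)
Your sketch is a reasonable outline of how one would establish the result, but note that the paper does not prove this theorem at all: it is simply quoted from \cite[Theorem 2.67]{CKM2017}, with the remark that a stronger version appears in \cite[Theorem 3.68]{CKM2017}. So your proposal goes well beyond the paper's own treatment, which consists solely of the citation; in that sense your closing sentence (``the argument is the general categorical statement of \cite[Theorem 2.67]{CKM2017}, whose proof applies in our vertex algebraic setting without modification'') already matches what the paper actually does.
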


A stronger assertion in a more general context can be found in 
\cite[Theorem 3.68]{CKM2017}.

\subsection{A chain of simple current extensions}\label{subsec:sequence_of_SCE}

Let $k, m \in \Z_{> 0}$, and let $\Z \gamma$ be a rank one lattice spanned by 
an element $\gamma$ with square norm $\la \gamma,\gamma \ra = 2km$. 
Let $W$ be a vertex operator algebra satisfying Hypothesis \ref{hypo_on_VOA}, 
and let $\{ W^j \mid j \in \Z_k \}$ be a $\Z_k$-graded set of simple current 
$W$-modules with $W^0 = W$. 
We also recall the fusion rule for $V_{\Z \gamma}$ \cite[Chapter 12]{DL1993}. 
Assume that $U = \bigoplus_{j = 0}^{k-1} W^j \otimes V_{\Z \gamma - j\gamma/k}$ is  
a vertex operator algebra which is a $\Z_k$-graded simple current extension of 
$W \otimes V_{\Z \gamma}$. 

Let $s \in \Z$ with $m+sk > 0$,  
and let $\gamma'$ be an element with square norm 
$\la \gamma', \gamma' \ra = 2k(m+sk)$. 
Then both the conformal weight of $V_{\Z \gamma - j\gamma/k}$ and 
the conformal weight of 
$V_{\Z \gamma' - j\gamma'/k}$ are congruent to $j^2m/k$ modulo $\Z$. 
Thus $W^j \otimes V_{\Z \gamma' - j\gamma'/k}$, $0 \le j < k$, are of 
integral weight, 
and the direct sum 
$\widetilde{U} = \bigoplus_{j = 0}^{k-1} W^j \otimes V_{\Z \gamma' - j\gamma'/k}$ 
has a unique vertex operator algebra structure as an extension of 
$W \otimes V_{\Z \gamma'}$. 
Hence the following theorem holds. 

\begin{theorem}\label{thm:deform_VOA}
Let $k, m \in \Z_{> 0}$, and let $\gamma$ be an element with $\la \gamma,\gamma \ra = 2km$. 
Let $W$ be a vertex operator algebra satisfying Hypothesis \ref{hypo_on_VOA}, 
and let $\{ W^j \mid j \in \Z_k \}$ be a $\Z_k$-graded set of simple current 
$W$-modules with $W^0 = W$. 
If $U = \bigoplus_{j = 0}^{k-1} W^j \otimes V_{\Z \gamma - j\gamma/k}$ 
is a $\Z_k$-graded simple current extension of $W \otimes V_{\Z \gamma}$, 
then for $s \in \Z$ with $m+sk > 0$ and an element $\gamma'$ with 
$\la \gamma', \gamma' \ra = 2k(m+sk)$, 
we have a $\Z_k$-graded simple current extension 
$\widetilde{U} = \bigoplus_{j = 0}^{k-1} W^j \otimes V_{\Z \gamma' - j\gamma'/k}$ 
of $W \otimes V_{\Z \gamma'}$. 
The vertex operator algebra $\widetilde{U}$ satisfies Hypothesis \ref{hypo_on_VOA}.
\end{theorem}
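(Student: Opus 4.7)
I would follow the outline sketched in the paragraph immediately preceding the statement. The strategy is to compute the conformal weights modulo $\Z$ for the lattice summands, use these to recognise $\widetilde{U}$ as a totally isotropic $\Z_k$-graded simple current extension of $W\otimes V_{\Z\gamma'}$, and then invoke Theorem \ref{thm:VOSA} to get the desired vertex operator algebra structure.

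The key computation is that $h(V_{\Z\gamma - j\gamma/k})$ and $h(V_{\Z\gamma' - j\gamma'/k})$ are both congruent to $j^2 m/k$ modulo $\Z$. For any representative $\lambda$ of a coset in $(\Z\gamma)^*/\Z\gamma$, changing $\lambda$ to $\lambda + n\gamma$ alters $\langle\lambda,\lambda\rangle/2$ by $n\langle\lambda,\gamma\rangle + n^2\langle\gamma,\gamma\rangle/2$; taking $\lambda = -j\gamma/k$ one has $\langle\lambda,\gamma\rangle = -2jm \in \Z$ and $\langle\gamma,\gamma\rangle/2 = km \in \Z$, so this difference lies in $\Z$ and $h(V_{\Z\gamma - j\gamma/k}) \equiv j^2 m/k \pmod{\Z}$. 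The same argument with $\gamma'$ in place of $\gamma$ yields $h(V_{\Z\gamma' - j\gamma'/k}) \equiv j^2(m+sk)/k = j^2 m/k + j^2 s \equiv j^2 m/k \pmod{\Z}$. Since $U$ is a vertex operator algebra, $h(W^j) + j^2 m/k \in \Z$ for each $j$, and therefore $h(W^j \otimes V_{\Z\gamma' - j\gamma'/k}) \in \Z$ as well.

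Next, using Proposition \ref{prop:tensor_prod_fusion} together with the standard lattice fusion rule $V_{\Z\gamma' + \lambda}\boxtimes_{V_{\Z\gamma'}} V_{\Z\gamma' + \mu} = V_{\Z\gamma' + \lambda + \mu}$, I would check that the cosets $-j\gamma'/k + \Z\gamma'$ are distinct for $0 \le j < k$ and combine correctly modulo $\Z\gamma'$, so that $\{W^j \otimes V_{\Z\gamma' - j\gamma'/k} \mid j \in \Z_k\}$ is a $\Z_k$-graded set of simple current $W \otimes V_{\Z\gamma'}$-modules. Integrality of all summand weights makes this set totally isotropic for the quadratic form on $\SC{W \otimes V_{\Z\gamma'}}$, so Theorem \ref{thm:VOSA} produces either a simple vertex operator algebra or a simple vertex operator superalgebra structure on $\widetilde{U}$; the integrality rules out the super case. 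Uniqueness then follows from \cite[Proposition 5.3]{DM2004}, and Hypothesis \ref{hypo_on_VOA} for $\widetilde{U}$ is inherited from $W \otimes V_{\Z\gamma'}$ via \cite[Theorem 2.14]{Yamauchi2004}, as noted earlier in the excerpt.

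The only delicate point is the conformal weight congruence: the shift $j^2 s$ between the $\gamma$ and $\gamma'$ cases is an integer precisely because the replacement is $m \mapsto m+sk$ with $s \in \Z$, and this is exactly what allows the lattice to be deformed without disturbing integrality of the total weights. Once this congruence is in hand, the remainder is a routine application of the machinery already assembled in Section \ref{sec:preliminaries}.
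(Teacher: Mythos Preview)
Your proposal is correct and follows exactly the paper's argument given in the paragraph preceding the theorem: the key step is the congruence $h(V_{\Z\gamma' - j\gamma'/k}) \equiv j^2 m/k \equiv h(V_{\Z\gamma - j\gamma/k}) \pmod{\Z}$, from which integrality of the weights of $W^j \otimes V_{\Z\gamma' - j\gamma'/k}$ follows using the VOA hypothesis on $U$, and then existence and uniqueness of the simple current extension structure on $\widetilde{U}$ is invoked. You supply more computational detail than the paper does (the explicit coset-weight check and the verification of the $\Z_k$-grading via Proposition~\ref{prop:tensor_prod_fusion}), but the strategy is identical.
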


An important example is that $W$ is the parafermion vertex operator algebra 
$K(\mathfrak{sl}_2, k)$ with $m = 1$ and 
$U = L_{\widehat{\mathfrak{sl}}_2}(k,0)$ is the simple affine vertex operator algebra 
at level $k$. 
In this case, $\widetilde{U}$ was studied in \cite{Adamovic2007} and 
\cite[Section 4.4]{CKM2017}.

\section{Irreducible $U$-modules and fusion rules for $W$ and $U$}\label{sec:general}

In this section, we assume the following hypothesis.

\begin{hypothesis}\label{hypo:W_V}
  \textup{(1)} $W$ and $V$ are vertex operator algebras satisfying  
  Hypothesis \ref{hypo_on_VOA}.

  \textup{(2)} All the irreducible $V$-modules are simple currents, and 
  $\Irr(V) = \{ V^\alpha \mid \alpha \in C\}$ is $C$-graded by a finite abelian group $C$ 
  with $V^0 = V$ and $V^\alpha \boxtimes_V V^\beta = V^{\alpha+\beta}$ 
  for $\alpha$, $\beta \in C$.  

  \textup{(3)} $D$ is a subgroup of $C$, and $\{ W^\alpha \mid \alpha \in D\}$ is 
  a $D$-graded set of simple current $W$-modules with
  $W^0 = W$ and $W^\alpha \boxtimes_V W^\beta = W^{\alpha+\beta}$ 
  for $\alpha$, $\beta \in D$.  

  \textup{(4)} The direct sum
  \begin{equation*}
    U = \bigoplus_{\beta \in D} W^\beta \otimes V^\beta
  \end{equation*}
  has a structure of a simple vertex operator algebra which extends the 
  $W \otimes V$-module structure on $U$. 
\end{hypothesis}

We discuss representations of $U$, and establish the relationship between 
the fusion rules for $U$ and $W$ under Hypothesis \ref{hypo:W_V}. 
Note that the tensor product $W \otimes V$ also satisfies Hypothesis \ref{hypo_on_VOA}.
Condition (2) of Hypothesis \ref{hypo:W_V} implies that 
$b_V$ is a non-degenerate symmetric $\Z$-bilinear form on $C$ by 
Corollary \ref{cor:bilinear} with 
$b_V(\alpha,\beta) = b_V(V^\alpha,V^\beta)$ for $\alpha$, $\beta\in C$.
Let
\begin{equation*}
  D^\perp = \{ \alpha\in C \mid b_V(\alpha,D)=0\}.
\end{equation*}
Then $\abs{C} = \abs{D} \abs{D^\perp}$ as $b_V$ is non-degenerate. 

The following lemma is a direct consequence of Hypothesis \ref{hypo:W_V}.

\begin{lemma}\label{lem:double_commutant+}
  The commutant of $W$ in $U$ is $V$, and the commutant of $V$ in $U$ is $W$.
\end{lemma}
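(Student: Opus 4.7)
The plan is to identify $\Com(W, U)$ with the space of $W$-invariants $U^W := \{u \in U \mid w_n u = 0 \text{ for all } w \in W, \ n \ge 0\}$ and then to exploit the $D$-graded decomposition $U = \bigoplus_{\beta \in D} W^\beta \otimes V^\beta$ supplied by Hypothesis \ref{hypo:W_V}(4). The inclusion $V \subseteq \Com(W, U)$ is immediate, since $W \otimes \mathbf{1}$ and $\mathbf{1} \otimes V$ mutually commute inside the tensor product sub-VOA $W \otimes V \subseteq U$; the inclusion $W \subseteq \Com(V, U)$ is equally clear.

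For the reverse inclusions, the $W$-action preserves each summand $W^\beta \otimes V^\beta$ (as these are pairwise non-isomorphic irreducible $W$-modules, viewed through the first tensor factor), so $U^W$ splits as $\bigoplus_{\beta \in D} (W^\beta)^W \otimes V^\beta$, reducing the problem to computing $(W^\beta)^W$. For $\beta = 0$, the simplicity and CFT-type assumptions on $W$ force $W^W$ to coincide with the weight-zero subspace $\C \mathbf{1}$, contributing precisely $\C \mathbf{1} \otimes V^0 = V$. For $\beta \neq 0$, the claim is that $(W^\beta)^W = 0$: a nonzero $m \in (W^\beta)^W$ is vacuum-like, and the assignment $\phi(w) = w_{-1} m$ defines a $W$-module homomorphism $W \to W^\beta$ by the Borcherds commutator identity together with the annihilation relations $x_n m = 0$ for $n \ge 0$. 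Since $W$ and $W^\beta$ are non-isomorphic irreducible $W$-modules, Schur's lemma forces $\phi = 0$, whence $m = 0$.

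Assembling these cases yields $\Com(W, U) = V$, and the symmetric argument with the roles of $W$ and $V$ exchanged gives $\Com(V, U) = W$. The main technical obstacle will be the verification that $w \mapsto w_{-1} m$ is genuinely a $W$-module map rather than merely a linear map; this is standard from Li's vacuum-like vector framework, but requires matching the modes on both sides of $\phi(x_n w) = x_n \phi(w)$ for every $n \in \Z$ via the Jacobi/commutator formula, absorbing the terms with non-negative mode indices using the defining conditions on $m$.
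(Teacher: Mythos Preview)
Your argument is correct. The paper itself does not supply a proof of this lemma; it merely asserts that the statement is ``a direct consequence of Hypothesis~\ref{hypo:W_V}'' and moves on. Your write-up therefore fills in details that the paper omits, and the route you take---identifying the commutant with the subspace of vacuum-like vectors, splitting along the $D$-grading, and ruling out the $\beta\neq 0$ summands via the Li-type homomorphism $w\mapsto w_{-1}m$ together with Schur's lemma---is the standard way to justify such a claim. The verification that $w\mapsto w_{-1}m$ is a $W$-module map follows, as you note, from the iterate formula: for any $u\in W$ and $n\in\Z$,
\[
(u_n w)_{-1}m=\sum_{j\ge 0}(-1)^j\binom{n}{j}\bigl(u_{n-j}w_{-1+j}m-(-1)^n w_{n-1-j}u_j m\bigr)=u_n w_{-1}m,
\]
since $u_j m=0$ for $j\ge 0$ and $w_{-1+j}m=0$ for $j\ge 1$. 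With $\phi(\mathbf{1})=m$, Schur then forces $m=0$, exactly as you claim. The only stylistic remark is that for $\beta=0$ you do not actually need simplicity of $W$: the inclusion $W^W\subset W_0=\C\mathbf{1}$ already follows from $L(0)w=\omega_1 w=0$ for $w\in W^W$ and the CFT-type assumption alone.
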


\subsection{$\Irr(W)$ and $\Irr(U)$}

The group $D$ acts on the set $\Irr(W)$ by 
\begin{equation}\label{eq:Daction+}
  X\mapsto W^\beta \boxtimes_W X
\end{equation}
for $\beta \in D$ and $X\in \Irr(W)$.
Let 
\begin{equation}\label{eq:OW}
  \Irr(W) = \bigcup_{i \in I} \mathcal{O}^i_W
\end{equation}
be the $D$-orbit decomposition.
For $i\in I$, let 
\begin{equation*}
  D_i = \{ \beta \in D \mid W^\beta \boxtimes_W X \cong X \mbox{ for } 
  X \in \mathcal{O}^i_W\} .
\end{equation*}

Since $D$ is abelian, $D_i$ is the stabilizer of $X$ for any 
$X \in \mathcal{O}^i_W$.
Therefore, the length of the orbit $\mathcal{O}^i_W$ is $[D:D_i]$.

\begin{lemma}\label{lem:Di_D}
  $D_i\subset D^\perp$ for $i\in I$.
\end{lemma}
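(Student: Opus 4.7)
The plan is to deduce the statement from Proposition \ref{prop:bV-bilinear-rev} by first using the VOA structure on $U$ to relate the bilinear forms $b_W$ and $b_V$ on the subgroup $D$, and then converting the stabilizer condition $W^\beta \boxtimes_W X \cong X$ into orthogonality via biadditivity.

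First I would exploit condition (4) of Hypothesis \ref{hypo:W_V}: since $U = \bigoplus_{\beta \in D} W^\beta \otimes V^\beta$ is a vertex operator algebra, each graded summand $W^\beta \otimes V^\beta$ must have integral conformal weight, forcing $q_W(W^\beta) + q_V(\beta) = 0$ for every $\beta \in D$. Writing $b_W$ in terms of $q_W$ via \eqref{eq:bVqV} and using $W^\alpha \boxtimes_W W^\beta = W^{\alpha+\beta}$, this translates into
\begin{equation*}
  b_W(W^\alpha, W^\beta) = -b_V(\alpha, \beta) \quad \text{for all } \alpha, \beta \in D.
\end{equation*}

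Next, for $\beta \in D_i$ and any $X \in \mathcal{O}^i_W$, I would apply Proposition \ref{prop:bV-bilinear-rev}(2) to the vertex operator algebra $W$ to obtain
\begin{equation*}
  b_W(W^\alpha, W^\beta \boxtimes_W X) = b_W(W^\alpha, W^\beta) + b_W(W^\alpha, X) \quad \text{for } \alpha \in D.
\end{equation*}
Since $W^\beta \boxtimes_W X \cong X$ by the definition of $D_i$, the left-hand side collapses to $b_W(W^\alpha, X)$, forcing $b_W(W^\alpha, W^\beta) = 0$. Combined with the identity from the previous step, this yields $b_V(\alpha, \beta) = 0$ for every $\alpha \in D$, which is precisely the assertion $\beta \in D^\perp$.

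I do not anticipate any serious obstacle: the whole argument rests on the identification $b_W|_{D \times D} = -b_V|_{D \times D}$ imposed by integrality of conformal weights of the graded summands of $U$, and once this bridge is in place the lemma is immediate from the biadditivity supplied by Proposition \ref{prop:bV-bilinear-rev}(2).
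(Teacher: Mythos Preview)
Your proof is correct and follows essentially the same approach as the paper: both arguments use Proposition~\ref{prop:bV-bilinear-rev}(2) together with the stabilizer condition to force $b_W(W^\alpha,W^\beta)=0$, and then invoke the integrality of conformal weights in $U$ to convert this into $b_V(\alpha,\beta)=0$. The only difference is the order in which these two steps are presented.
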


\begin{proof}
Let $\gamma\in D_i$, $\beta\in D$, and $X \in \mathcal{O}_W^i$.
Then by (2) of Proposition \ref{prop:bV-bilinear-rev}, 
\[
  b_W(W^\beta,X)
  = b_W(W^\beta,W^\gamma \boxtimes_W X)
  = b_W(W^\beta,W^\gamma) + b_W(W^\beta,X), 
\]
so $b_W(W^\beta,W^\gamma)=0$. 
Since $q_{W \otimes V}(W^\beta \otimes V^\beta) = 0$ for $\beta \in D$ 
by Hypothesis \ref{hypo:W_V}, we have $q_W(W^\beta) = - q_V(V^\beta)$. 
Thus $b_W(W^\beta,W^\gamma) = -b_V(\beta,\gamma)$. 
Hence $\gamma \in D^\perp$.
\end{proof}

For each $i \in I$, we pick $W^{i,0} \in \mathcal{O}^i_W$ and fix it.
Here we assume that $0 \in I$, and assign $W^{0,0}=W$ so that 
$\mathcal{O}^0_W=\{ W^\beta \mid \beta \in D\}$. 
We set
\begin{equation}\label{eq:def_Wib+}
  W^{i,\beta} = W^\beta \boxtimes_V W^{i,0}
\end{equation}
for $i\in I$ and $\beta\in D$.
Then $\mathcal{O}^i_W=\{ W^{i,\beta} \mid \beta\in D\}$. 
The next lemma holds.

\begin{lemma}\label{lem:irr_W}
  Let $i$, $i' \in I$ and $\beta$, $\beta'\in D$.
  Then $W^{i,\beta} \cong W^{i',\beta'}$ as $W$-modules if and only if $i=i'$ and 
  $\beta \equiv \beta' \pmod{D_i}$.
\end{lemma}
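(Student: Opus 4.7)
The plan is to prove both directions by unpacking the definition of $W^{i,\beta}$ as an orbit translate and using that each $W^\beta$ is a simple current (hence invertible with inverse $W^{-\beta}$) in the fusion algebra of $W$.

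For the forward direction, suppose $W^{i,\beta} \cong W^{i',\beta'}$. Since $W^{i,\beta} = W^\beta \boxtimes_W W^{i,0}$ lies in $\mathcal{O}^i_W$ and $W^{i',\beta'}$ lies in $\mathcal{O}^{i'}_W$, and since the $D$-orbits $\{\mathcal{O}^i_W\}_{i\in I}$ form a partition of $\Irr(W)$ by \eqref{eq:OW}, we must have $i = i'$. With $i = i'$, fusing both sides with the inverse simple current $W^{-\beta}$ and using associativity and commutativity of $\boxtimes_W$ gives
\begin{equation*}
  W^{i,0} \cong W^{-\beta} \boxtimes_W W^{\beta} \boxtimes_W W^{i,0}
  \cong W^{-\beta} \boxtimes_W W^{\beta'} \boxtimes_W W^{i,0}
  \cong W^{\beta'-\beta} \boxtimes_W W^{i,0},
\end{equation*}
so $\beta' - \beta \in D_i$ by the very definition of the stabilizer.

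For the converse, if $i = i'$ and $\beta' = \beta + \gamma$ with $\gamma \in D_i$, then
\begin{equation*}
  W^{i,\beta'} = W^{\beta+\gamma} \boxtimes_W W^{i,0}
  \cong W^\beta \boxtimes_W (W^\gamma \boxtimes_W W^{i,0})
  \cong W^\beta \boxtimes_W W^{i,0} = W^{i,\beta},
\end{equation*}
using $W^\gamma \boxtimes_W W^{i,0} \cong W^{i,0}$ which is the defining property of $\gamma \in D_i$ applied to the representative $W^{i,0}$.

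There is really no substantial obstacle here; the lemma is a formal consequence of the orbit-stabilizer setup together with invertibility of simple currents in the fusion ring. The only thing to be careful about is that the stabilizer $D_i$ is defined in terms of every $X \in \mathcal{O}^i_W$, but since $D$ is abelian, the stabilizer is constant along the orbit, which is implicitly used when we reduce to checking the isomorphism on the chosen representative $W^{i,0}$.
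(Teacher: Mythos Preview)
Your proof is correct. The paper states this lemma without proof, treating it as an immediate consequence of the orbit decomposition \eqref{eq:OW} and the definition of the stabilizer $D_i$; your argument supplies exactly those routine details via invertibility of simple currents and the orbit--stabilizer setup, which is the intended reasoning.
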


For $i$, $i' \in I$ and $\beta$, $\beta' \in D$, we write
\begin{equation}\label{eq:sim}
  (i,\beta) \sim (i',\beta') 
\end{equation}
if $W^{i,\beta} \cong W^{i',\beta'}$ as $W$-modules.
This is an equivalence relation on $I\times D$,   
and every element of $\Irr(W)$ is uniquely indexed by an element of $I\times D/{\sim}$ 
as $W^{i,\beta}$.
\begin{equation}\label{eq:irr_W}
  \Irr(W) = \{ W^{i,\beta} \mid (i,\beta) \in (I\times D)/{\sim}\}.
\end{equation}
Here and further we identify $(i,\beta)\in I\times D$ with its equivalence class in 
$(I\times D)/{\sim}$ by abuse of notation.

We define an action of $D$ on $\Irr(W\otimes V)$ by 
\begin{equation}\label{eq:diag_action}
  X\mapsto (W^\beta \otimes V^\beta)\boxtimes_{W\otimes V} X
\end{equation}
for $\beta \in D$ and $X\in \Irr(W\otimes V)$.

\begin{lemma}\label{lem:free}
  $D$ acts on $\Irr(W\otimes V)$ freely by \eqref{eq:diag_action}.
\end{lemma}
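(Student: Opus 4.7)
The plan is to reduce freeness of the $D$-action on $\Irr(W \otimes V)$ to freeness of the translation action of $D$ on $\Irr(V) = \{V^\alpha \mid \alpha \in C\}$, which is immediate by Hypothesis \ref{hypo:W_V}~(2).

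First, I would invoke the standard fact that, under Hypothesis \ref{hypo_on_VOA}, every irreducible $W \otimes V$-module is isomorphic to a tensor product $M \otimes N$ with $M \in \Irr(W)$ and $N \in \Irr(V)$, and that the pair $(M, N)$ is uniquely determined up to isomorphism (see \cite{FHL1993}). By (2) of Hypothesis \ref{hypo:W_V}, the $V$-factor must be of the form $V^\alpha$ for a unique $\alpha \in C$, so any $X \in \Irr(W \otimes V)$ can be written as $X = M \otimes V^\alpha$.

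Second, I would apply Proposition \ref{prop:tensor_prod_fusion} to the tensor-product modules to obtain the fusion-product factorization
$$(W^\beta \otimes V^\beta) \boxtimes_{W \otimes V} (M \otimes V^\alpha) \cong (W^\beta \boxtimes_W M) \otimes (V^\beta \boxtimes_V V^\alpha) \cong (W^\beta \boxtimes_W M) \otimes V^{\alpha + \beta},$$
where the second isomorphism uses $V^\beta \boxtimes_V V^\alpha = V^{\alpha+\beta}$ from (2) of Hypothesis \ref{hypo:W_V}.

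Finally, suppose that $(W^\beta \otimes V^\beta) \boxtimes_{W \otimes V} X \cong X$ for some $X = M \otimes V^\alpha \in \Irr(W \otimes V)$. By the uniqueness of the tensor factorization recalled in the first step, the $V$-factors must agree up to isomorphism: $V^{\alpha + \beta} \cong V^\alpha$. Since $\{V^\gamma \mid \gamma \in C\}$ consists of pairwise inequivalent irreducible $V$-modules, this forces $\beta = 0$, which is exactly freeness of the action. I do not anticipate any substantive obstacle; the only mildly technical point is the factorization of the fusion product in the second step, which is an immediate consequence of Proposition \ref{prop:tensor_prod_fusion} together with the definition \eqref{eq:def_fusion_product} of the fusion product.
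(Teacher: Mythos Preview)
Your proof is correct and follows essentially the same approach as the paper: decompose an irreducible $W\otimes V$-module as a tensor product, factor the fusion product over the two tensor factors, and observe that the $V$-component goes from $V^\alpha$ to $V^{\alpha+\beta}$, forcing $\beta=0$. The only cosmetic difference is that the paper writes the $W$-factor in the parametrized form $W^{i,\gamma}$ from \eqref{eq:irr_W}, whereas you keep it as a generic $M\in\Irr(W)$; this is immaterial to the argument.
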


\begin{proof}
Let $\beta \in D$ and $X \in \Irr(W \otimes V)$.
By \eqref{eq:irr_W}, $X$ is isomorphic to
$W^{i,\gamma} \otimes V^\alpha$ for some $(i,\gamma) \in I\times D$ and $\alpha \in C$.
Then 
\[
  (W^\beta\otimes V^\beta)\boxtimes_{W\otimes V} X
  \cong 
  (W^\beta \boxtimes_W W^{i,\gamma}) \otimes (V^\beta \boxtimes_V V^\alpha)
  \cong W^{i,\gamma+\beta} \otimes V^{\alpha+\beta}.
\]
Since $V^\alpha \cong V^{\alpha+\beta}$ as $V$-modules if and only if $\beta=0$, 
the group action in \eqref{eq:diag_action} is free.
\end{proof}

By Theorem \ref{thm:induced_module}, every irreducible $W\otimes V$-module can be 
uniquely extended to an irreducible $\chi$-twisted $U$-module for some $\chi\in D^*$.
To describe irreducible twisted $U$-modules precisely, 
we introduce two $\Q/\Z$-valued maps on $D$ by 
\begin{equation}\label{eq:two_maps}
\begin{split}
  \eta_\alpha(\beta)&= b_V(V^\beta,V^\alpha),\\
  \xi_{i,\alpha} (\beta)&= b_{W \otimes V}(W^\beta \otimes V^\beta, W^{i,0} \otimes V^\alpha) 
  = b_W(W^\beta,W^{i,0}) + \eta_\alpha(\beta)
\end{split}
\end{equation}
for $i \in I$, $\alpha \in C$, and $\beta \in D$, where $b_W$ and $b_V$ are defined 
as in \eqref{eq:quad}.
It follows from Proposition \ref{prop:bV-bilinear-rev} that $\eta_\alpha$ and $\xi_{i,\alpha}$ are 
$\Z$-linear maps on $D$. 
Thus we can define linear characters $\widehat{\eta}_\alpha$ 
and $\widehat{\xi}_{i,\alpha}$ of $D$ by 
\begin{equation}\label{eq:two_chars}
  \widehat{\eta}_\alpha(\beta) = \exp(2\pi \sqrt{-1}\, \eta_\alpha(\beta)),
  \quad  
  \widehat{\xi}_{i,\alpha}(\beta) = \exp(2\pi \sqrt{-1}\,\xi_{i,\alpha}(\beta))
\end{equation}
for $\beta\in D$.

\begin{lemma}\label{lem:characters}
  Let $i \in I$. 

  \textup{(1)} The map 
  $\widehat{\eta} : C\to D^*;$ $\alpha\mapsto \widehat{\eta}_\alpha$
  is an epimorphism with kernel $D^\perp$.

  \textup{(2)} $\xi_{i,\alpha+\delta}=\xi_{i,\alpha}+\eta_{\delta}$ and 
  $\widehat{\xi}_{i,\alpha+\delta}=\widehat{\xi}_{i,\alpha}\,
  \widehat{\eta}_{\delta}$ for $\alpha$, $\delta\in C$.

  \textup{(3)} $\widehat{\xi}_{i,\alpha}=\widehat{\xi}_{i,\alpha'}$ if and only if 
  $\alpha \equiv \alpha' \pmod{D^\perp}$.
\end{lemma}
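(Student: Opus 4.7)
The plan is to prove the three assertions in order, using bilinearity of $b_V$ and the non-degeneracy of $b_V$ as a $\Z$-bilinear form on $C$ (which is in force since $\SC{V}=\Irr(V)$ under Hypothesis~\ref{hypo:W_V}, by Corollary~\ref{cor:bilinear}).

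For assertion (1), I would first observe that by Proposition~\ref{prop:bV-bilinear-rev}(2), the map $\beta\mapsto b_V(V^\beta,V^\alpha)$ is a $\Z$-linear map $D\to\Q/\Z$, so $\widehat{\eta}_\alpha\in D^\ast$ is well defined, and by Proposition~\ref{prop:bV-bilinear-rev}(1), the map $\alpha\mapsto \widehat{\eta}_\alpha$ is a group homomorphism $C\to D^\ast$. The kernel is $\{\alpha\in C\mid b_V(\beta,\alpha)=0\text{ for all }\beta\in D\}=D^\perp$ by the symmetry of $b_V$. Hence $\widehat{\eta}$ induces an injection $C/D^\perp\hookrightarrow D^\ast$. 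Since $b_V$ is non-degenerate, the discussion preceding Hypothesis~\ref{hypo:W_V} gives $\abs{C}=\abs{D}\abs{D^\perp}$, so $\abs{C/D^\perp}=\abs{D}=\abs{D^\ast}$ and the injection is a bijection. Thus $\widehat{\eta}$ is surjective with kernel $D^\perp$.

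For assertion (2), $\Z$-bilinearity of $b_V$ (Proposition~\ref{prop:bV-bilinear-rev}(2) applied in the second argument) yields
\begin{equation*}
\eta_{\alpha+\delta}(\beta)=b_V(V^\beta,V^{\alpha+\delta})=b_V(V^\beta,V^\alpha)+b_V(V^\beta,V^\delta)=\eta_\alpha(\beta)+\eta_\delta(\beta),
\end{equation*}
so from the definition \eqref{eq:two_maps},
\begin{equation*}
\xi_{i,\alpha+\delta}(\beta)=b_W(W^\beta,W^{i,0})+\eta_{\alpha+\delta}(\beta)=\xi_{i,\alpha}(\beta)+\eta_\delta(\beta).
\end{equation*}
Exponentiating through \eqref{eq:two_chars} then gives $\widehat{\xi}_{i,\alpha+\delta}=\widehat{\xi}_{i,\alpha}\widehat{\eta}_\delta$.

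Assertion (3) is then immediate from (1) and (2): the equality $\widehat{\xi}_{i,\alpha}=\widehat{\xi}_{i,\alpha'}$ is equivalent, via (2) with $\delta=\alpha-\alpha'$, to $\widehat{\eta}_{\alpha-\alpha'}$ being the trivial character of $D$, and by (1) this happens exactly when $\alpha-\alpha'\in\ker\widehat{\eta}=D^\perp$. The only genuine content of the lemma is the surjectivity in (1), which is where the cardinality identity $\abs{C}=\abs{D}\abs{D^\perp}$ (hence non-degeneracy of $b_V$) is essential; the rest is formal bookkeeping with the bilinear form and exponentiation.
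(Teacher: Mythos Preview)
Your proof is correct and follows the same approach as the paper's, which simply cites Corollary~\ref{cor:bilinear} for (1), the definitions \eqref{eq:two_maps}--\eqref{eq:two_chars} for (2), and then (1) and (2) for (3); you have merely spelled out the details (homomorphism, kernel, cardinality count for surjectivity) that the paper leaves implicit. One cosmetic point: your citations of parts (1) and (2) of Proposition~\ref{prop:bV-bilinear-rev} are swapped---linearity of $\beta\mapsto b_V(V^\beta,V^\alpha)$ is part~(1) and linearity in $\alpha$ is part~(2)---but since $b_V$ is symmetric on $C\times C$ here this is immaterial.
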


\begin{proof}
The assertion (1) holds by Corollary \ref{cor:bilinear}. 
The assertion (2) is clear by the definitions in \eqref{eq:two_maps} and \eqref{eq:two_chars}.
The assertion (3) follows from (1) and (2).
\end{proof}

Let 
\begin{equation}\label{eq:def_Uia+}
  U^{i,\alpha}
  = U\boxtimes_{W\otimes V} (W^{i,0}\otimes V^\alpha) 
  = \bigoplus_{\beta\in D} W^{i,\beta}\otimes V^{\alpha+\beta}
\end{equation}
for $i\in I$ and $\alpha \in C$.
The index $\alpha$ of $U^{i,\alpha}$ depends on the choice of 
a representative $W^{i,0}$ of the $D$-orbit $\mathcal{O}_W^i$. 
In fact, $U \boxtimes_{W \otimes V} (W^{i,\beta} \otimes V^\alpha) = U^{i, \alpha - \beta}$. 

\begin{theorem}\label{thm:all_modules}
  \textup{(1)} $U^{i,\alpha}$ is an irreducible $\widehat{\xi}_{i,\alpha}$-twisted $U$-module 
  for $(i,\alpha) \in I\times C$.

  \textup{(2)} For $(i,\alpha)$, $(i',\alpha') \in I\times C$, we have 
  $U^{i,\alpha}\cong U^{i',\alpha'}$ as $\chi$-twisted $U$-modules 
  for some $\chi \in D^*$ if and only if 
  $i=i'$ and $\alpha \equiv \alpha' \pmod{D_i}$.

  \textup{(3)} For $\chi \in D^*$ and $i \in I$, 
  there exists $\alpha \in C$ such that $U^{i,\alpha}$ is an irreducible $\chi$-twisted 
  $U$-module.

  \textup{(4)} Let $\chi \in D^*$. 
  Then any irreducible $\chi$-twisted $U$-module is isomorphic to $U^{i,\alpha}$ for some 
  $(i,\alpha) \in I\times C$.
\end{theorem}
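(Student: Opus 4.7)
The plan is to derive all four parts from Theorem \ref{thm:induced_module} applied to the simple current extension $W\otimes V\subset U$, together with the classification of irreducible $W\otimes V$-modules given by Proposition \ref{prop:tensor_prod_fusion} and \eqref{eq:irr_W}. For part (1), by Lemma \ref{lem:free} the group $D$ acts freely on $\Irr(W\otimes V)$ via \eqref{eq:diag_action}, so Theorem \ref{thm:induced_module} applies directly to the irreducible $W\otimes V$-module $X=W^{i,0}\otimes V^\alpha$, equipping $U^{i,\alpha}=U\boxtimes_{W\otimes V}X$ with a unique irreducible $\widehat{\xi}_X$-twisted $U$-module structure. The twist character is then computed by
\begin{equation*}
\xi_X(\beta)=b_{W\otimes V}(W^\beta\otimes V^\beta,W^{i,0}\otimes V^\alpha)=b_W(W^\beta,W^{i,0})+b_V(V^\beta,V^\alpha)=\xi_{i,\alpha}(\beta),
\end{equation*}
where additivity over tensor factors follows from Proposition \ref{prop:tensor_prod_fusion} and the additivity of conformal weights.

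For part (2), I would compare the $W\otimes V$-decompositions of the two sides. If $U^{i,\alpha}\cong U^{i',\alpha'}$ as twisted $U$-modules, then restriction forces the summand $W^{i,0}\otimes V^\alpha$ of $U^{i,\alpha}$ to match some summand $W^{i',\beta}\otimes V^{\alpha'+\beta}$ of $U^{i',\alpha'}$. Lemma \ref{lem:irr_W} gives $i=i'$ and $\beta\in D_i$, while the $V$-factor yields $\alpha=\alpha'+\beta$, so $\alpha\equiv\alpha'\pmod{D_i}$. Conversely, if $i=i'$ and $\alpha-\alpha'=\beta\in D_i$, then $W^{i,\beta}\cong W^{i,0}$ as $W$-modules, so $W^{i,0}\otimes V^\alpha\cong W^{i,\beta}\otimes V^{\alpha'+\beta}$ appears as a summand of $U^{i,\alpha'}$; the uniqueness clause in Theorem \ref{thm:induced_module} then furnishes the required isomorphism of twisted $U$-modules, and the two twist characters automatically coincide by Lemma \ref{lem:characters}(3), since $D_i\subset D^\perp$ by Lemma \ref{lem:Di_D}. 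For part (3), Lemma \ref{lem:characters}(1) and (2) reduce the problem to solving $\widehat{\eta}_\alpha=\chi\,\widehat{\xi}_{i,0}^{-1}$ for $\alpha\in C$, which is always possible since $\widehat{\eta}:C\to D^*$ is surjective.

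The real work is in part (4). Given an irreducible $\chi$-twisted $U$-module $M$, I would use that $W\otimes V$ is the fixed-point subalgebra $U^{D^*}$ (so that $M$ restricts to a completely reducible $W\otimes V$-module) and pick an irreducible $W\otimes V$-submodule $Y$. By \eqref{eq:irr_W} and Proposition \ref{prop:tensor_prod_fusion}, $Y\cong W^{i,\beta}\otimes V^\alpha$ for some data, and hence $U\boxtimes_{W\otimes V}Y\cong U^{i,\alpha-\beta}$, which is an irreducible $\chi$-twisted $U$-module by part (1) containing a copy of $Y$. Since $M$ is also irreducible and contains $Y$, the two must be isomorphic. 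The main obstacle is justifying this converse to Theorem \ref{thm:induced_module} — that every irreducible $\chi$-twisted $U$-module is obtained by induction from one of its $W\otimes V$-submodules — which is a Frobenius-reciprocity type argument in the category of twisted $U$-modules following the pattern of \cite[Theorem 3.3]{Yamauchi2004}.
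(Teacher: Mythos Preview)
Your proposal is correct and follows essentially the same route as the paper for all four parts. The only point worth flagging is part (4): the ``obstacle'' you identify is resolved more directly than a Frobenius-reciprocity argument. The paper simply observes that, since the $D$-action on $\Irr(W\otimes V)$ is free (Lemma \ref{lem:free}), the $W\otimes V$-modules $(W^\beta\otimes V^\beta)\boxtimes_{W\otimes V}Y$ for $\beta\in D$ are pairwise inequivalent; as $M$ is an irreducible twisted $U$-module containing $Y$, the $U$-action forces $M$ to coincide with their direct sum $U\boxtimes_{W\otimes V}Y=U^{i,\alpha-\beta}$ as a $W\otimes V$-module, and the uniqueness clause of Theorem \ref{thm:induced_module} then upgrades this to an isomorphism of $\chi$-twisted $U$-modules.
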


\begin{proof}
Since $U$ is a $D$-graded simple current extension of $W\otimes V$, 
the assertion (1) follows from Lemma \ref{lem:free} and Theorem \ref{thm:induced_module}.

The uniqueness in Theorem \ref{thm:induced_module} implies that 
$U^{i,\alpha}\cong U^{i',\alpha'}$ as $\chi$-twisted $U$-modules for some $\chi \in D^*$  
if and only if 
$U^{i,\alpha}\cong U^{i',\alpha'}$ as $W\otimes V$-modules.
Hence 
$U^{i,\alpha}$ and $U^{i',\alpha'}$ are isomorphic $\chi$-twisted $U$-modules if and only if 
$W^{i,0}\otimes V^{\alpha}$ and $W^{i',0}\otimes V^{\alpha'}$ belong to the same 
$D$-orbit. 
Thus the assertion (2) holds.

As for (3), let $\chi\in D^*$ and $i\in I$.
By Lemma \ref{lem:characters}, there exists $\alpha\in C$ such that 
$\chi=\widehat{\xi}_{i,\alpha}$. 
Then $U^{i,\alpha}$ is an irreducible $\chi$-twisted $U$-module by (1).

Let $\chi\in D^*$ and let $M$ be an irreducible $\chi$-twisted $U$-module.
Since $W\otimes V$ is rational, we can take an irreducible $W\otimes V$-submodule 
$X\cong W^{i,\gamma}\otimes V^\alpha$ of $M$ for some $i\in I$, $\gamma\in D$, 
and $\alpha \in C$ 
by \eqref{eq:irr_W}.
Since the action \eqref{eq:diag_action} of $D$ on $\Irr(W\otimes V)$ is free, 
the $W\otimes V$-modules $(W^\beta\otimes V^\beta)\boxtimes_{W\otimes V} X$, $\beta\in D$, 
are all inequivalent. 
Therefore, by the irreducibility, $M$ is isomorphic to 
\[
  U \boxtimes_{W \otimes V} X
  = \bigoplus_{\beta \in D} W^{i,\gamma+\beta} \otimes V^{\alpha+\beta}
  = U^{i,\alpha-\gamma}
\] 
as $W \otimes V$-modules. 
Then the uniqueness in Theorem \ref{thm:induced_module} implies that 
$M \cong U^{i,\alpha-\gamma}$ as $\chi$-twisted $U$-modules.
This completes the proof of the assertion (4).
\end{proof}

For $i\in I$ and $\chi\in D^*$, we set 
\begin{equation*}
  C(i,\chi) = \{ \alpha \in C \mid \widehat{\xi}_{i,\alpha} = \chi\} .
\end{equation*}

\begin{lemma}\label{lem:onCchi}
  $(1)$ $C(i,\chi)$ is a coset of $D^\perp$ in $C$ for $i\in I$ and $\chi\in D^*$. 
  In particular, $C(0,1) = D^\perp$, where $1$ is the principal character of $D$.

  $(2)$ $C(i,\widehat{\eta}_\alpha)=\alpha+C(i,1)$ for $\alpha \in C$.
\end{lemma}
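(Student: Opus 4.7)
The plan is to derive everything directly from Lemma \ref{lem:characters}, which has already packaged the essential group-theoretic content of the maps $\widehat\xi_{i,\alpha}$ and $\widehat\eta_\alpha$. No deeper input should be needed.

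For part (1), I would first check that $C(i,\chi)$ is contained in a single coset of $D^\perp$: if $\alpha,\alpha'\in C(i,\chi)$, then $\widehat\xi_{i,\alpha}=\widehat\xi_{i,\alpha'}$, so part (3) of Lemma \ref{lem:characters} forces $\alpha\equiv\alpha'\pmod{D^\perp}$. Next I would establish non-emptiness. Fix any $\alpha_0\in C$; by the surjectivity of $\widehat\eta$ in part (1) of Lemma \ref{lem:characters}, there exists $\delta\in C$ with $\widehat\eta_\delta=\widehat\xi_{i,\alpha_0}^{-1}\chi$. The identity $\widehat\xi_{i,\alpha_0+\delta}=\widehat\xi_{i,\alpha_0}\widehat\eta_\delta$ from part (2) of that lemma then yields $\widehat\xi_{i,\alpha_0+\delta}=\chi$, so $\alpha_0+\delta\in C(i,\chi)$. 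Together these two observations identify $C(i,\chi)$ with a coset of $D^\perp$. (Alternatively, Theorem \ref{thm:all_modules}(3) directly supplies the representative.)

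For the distinguished case $C(0,1)$, I would unwind the definition \eqref{eq:two_maps}: $\xi_{0,\alpha}(\beta)=b_W(W^\beta,W^{0,0})+\eta_\alpha(\beta)$, and since $W^{0,0}=W$ is the adjoint $W$-module, $W^\beta\boxtimes_W W=W^\beta$ gives $b_W(W^\beta,W)=h(W^\beta)-h(W^\beta)-h(W)=0$ in $\Q/\Z$. Hence $\widehat\xi_{0,\alpha}=\widehat\eta_\alpha$, and $C(0,1)=\ker\widehat\eta=D^\perp$ by Lemma \ref{lem:characters}(1).

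For part (2), I would apply Lemma \ref{lem:characters}(2) with $\delta=\alpha$: for any $\gamma\in C$,
\[
\widehat\xi_{i,\gamma}=\widehat\xi_{i,(\gamma-\alpha)+\alpha}=\widehat\xi_{i,\gamma-\alpha}\,\widehat\eta_\alpha,
\]
so $\widehat\xi_{i,\gamma}=\widehat\eta_\alpha$ if and only if $\widehat\xi_{i,\gamma-\alpha}=1$, i.e. $\gamma-\alpha\in C(i,1)$. This gives the equality $C(i,\widehat\eta_\alpha)=\alpha+C(i,1)$. There is essentially no obstacle beyond keeping the indices and the $\Q/\Z$-additive versus multiplicative notation straight; the only step that requires a moment's thought is the non-emptiness in part (1), which is where Lemma \ref{lem:characters}(1) does the real work.
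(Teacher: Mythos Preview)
Your proposal is correct and follows essentially the same approach as the paper: both arguments extract everything from Lemma~\ref{lem:characters}, using parts (1) and (2) to get surjectivity of $\alpha\mapsto\widehat\xi_{i,\alpha}$ onto $D^*$ (hence non-emptiness of each $C(i,\chi)$), part (3) for the coset structure, and part (2) again for assertion (2). Your write-up simply unpacks the details more explicitly, including the direct verification that $\widehat\xi_{0,\alpha}=\widehat\eta_\alpha$ for the special case $C(0,1)=D^\perp$.
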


\begin{proof}
Let $i \in I$. 
By (1) and (2) of Lemma \ref{lem:characters}, we see that 
$D^* = \{ \widehat{\xi}_{i,\alpha} \mid \alpha \in C \}$. 
Then the assertion (1) follows from (3) of Lemma \ref{lem:characters}.
The assertion (2) follows from (2) of Lemma \ref{lem:characters}.
\end{proof}

For $\chi \in D^\ast$, let 
\begin{equation}\label{eq:R_chi}
R(\chi) = \{ (i, \alpha) \mid i \in I, \alpha \in C(i,\chi) \}. 
\end{equation}
Then any irreducible $\chi$-twisted $U$-module is isomorphic to $U^{i,\alpha}$ 
for some $(i,\alpha) \in R(\chi)$ by Theorem \ref{thm:all_modules}. 
For $(i,\alpha)$, $(i',\alpha') \in R(\chi)$, we write 
\begin{equation}\label{eq:approx}
(i,\alpha) \approx (i',\alpha')
\end{equation}
if $U^{i,\alpha} \cong U^{i',\alpha'}$ as $\chi$-twisted $U$-modules. 
We have $(i,\alpha) \approx (i',\alpha')$ if and only if 
$i=i'$ and $\alpha \equiv \alpha' \pmod{D_i}$ by Theorem \ref{thm:all_modules}. 
The relation $(i,\alpha) \approx (i',\alpha')$ is an equivalence relation on $R(\chi)$, 
and the equivalence classes of irreducible $\chi$-twisted $U$-modules are 
indexed by $R(\chi)/{\approx}$.
In particular,
\[
  \Irr(U) = \{ U^{i,\alpha} \mid (i,\alpha) \in R(1)/{\approx} \}.
\]

\begin{theorem}\label{thm:number_of_irred}
  For $\chi\in D^*$, the number of inequivalent irreducible $\chi$-twisted $U$-modules is 
  $\abs{C}\abs{\Irr(W)}/\abs{D}^2$.
\end{theorem}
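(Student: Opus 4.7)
The plan is to count $|R(\chi)/{\approx}|$ directly by partitioning $R(\chi)$ according to the first coordinate. For each $i \in I$, let $R(\chi)_i = \{(i,\alpha) \mid \alpha \in C(i,\chi)\}$, so that $R(\chi) = \bigsqcup_{i \in I} R(\chi)_i$. By Theorem \ref{thm:all_modules}(2), the restriction of $\approx$ to $R(\chi)_i$ identifies $(i,\alpha)$ with $(i,\alpha')$ precisely when $\alpha \equiv \alpha' \pmod{D_i}$. Hence the equivalence classes in $R(\chi)_i$ are in bijection with cosets of $D_i$ inside $C(i,\chi)$.

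Next I would assemble the counts. By Lemma \ref{lem:onCchi}(1), $C(i,\chi)$ is a coset of $D^\perp$ in $C$, so $\abs{C(i,\chi)} = \abs{D^\perp}$; by Lemma \ref{lem:Di_D}, $D_i \subset D^\perp$, so the quotient $C(i,\chi)/D_i$ has order $\abs{D^\perp}/\abs{D_i}$. Thus
\begin{equation*}
  \abs{R(\chi)/{\approx}} \;=\; \sum_{i \in I} \frac{\abs{D^\perp}}{\abs{D_i}}.
\end{equation*}
The orbit--stabilizer relation applied to the $D$-action on $\Irr(W)$ gives $\abs{\mathcal{O}^i_W} = [D:D_i] = \abs{D}/\abs{D_i}$, so the sum becomes
\begin{equation*}
  \frac{\abs{D^\perp}}{\abs{D}} \sum_{i \in I} \abs{\mathcal{O}^i_W}
  \;=\; \frac{\abs{D^\perp}\,\abs{\Irr(W)}}{\abs{D}},
\end{equation*}
using the $D$-orbit decomposition \eqref{eq:OW}.

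Finally I would invoke the non-degeneracy of $b_V$ on $C$ (Corollary \ref{cor:bilinear}, available because $\SC{V} = \Irr(V)$ by Hypothesis \ref{hypo:W_V}(2)) to conclude $\abs{C} = \abs{D}\abs{D^\perp}$, i.e.\ $\abs{D^\perp} = \abs{C}/\abs{D}$, which upon substitution yields the claimed formula $\abs{C}\abs{\Irr(W)}/\abs{D}^2$.

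There is no real obstacle: every ingredient has been prepared. The only point that needs care is verifying that the equivalence $\approx$ on $R(\chi)$ never collapses pairs with distinct $i$, which is immediate from Theorem \ref{thm:all_modules}(2), and that the counting on each $R(\chi)_i$ is well defined because $D_i \subset D^\perp$ makes cosets of $D_i$ inside a coset of $D^\perp$ meaningful.
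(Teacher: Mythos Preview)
Your proof is correct and follows essentially the same route as the paper's: both compute $\abs{R(\chi)/{\approx}} = \sum_{i\in I}[D^\perp:D_i]$ using Lemmas \ref{lem:Di_D} and \ref{lem:onCchi}, then convert this via orbit--stabilizer and \eqref{eq:OW} to $(\abs{D^\perp}/\abs{D})\abs{\Irr(W)}$, and finish with $\abs{C}=\abs{D}\abs{D^\perp}$. Your write-up is somewhat more explicit about why $D_i\subset D^\perp$ is needed for the coset count to make sense, but the argument is the same.
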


\begin{proof}
The number of inequivalent irreducible $\chi$-twisted $U$-modules is 
$\abs{R(\chi)/{\approx}}$, which is equal to
\[
  \sum_{i\in I} [D^\perp:D_i]
  = \dfrac{\abs{D^\perp}}{\abs{D}} \sum_{i\in I} [D:D_i]
  = \dfrac{[C:D]}{\abs{D}} \sum_{i\in I} \abs{\mathcal{O}^i_W}
  = \dfrac{\abs{C}}{\abs{D}^2} \abs{\Irr(W)}
\]
by Lemmas \ref{lem:Di_D} and \ref{lem:onCchi}, and \eqref{eq:OW}. 
\end{proof}

\begin{corollary}\label{cor:count}
  $\abs{\Irr(U)}=\abs{C}\abs{\Irr(W)}/\abs{D}^2$.
\end{corollary}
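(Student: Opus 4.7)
The plan is essentially to apply Theorem \ref{thm:number_of_irred} with $\chi$ equal to the principal character $1 \in D^*$. Under the convention fixed in the paper (following \cite{DLM2000}), a $1$-twisted $U$-module is the same as an ordinary (untwisted) $U$-module, so $\Irr(U)$ is in bijection with the set of isomorphism classes of irreducible $1$-twisted $U$-modules.

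More precisely, I would first invoke Theorem \ref{thm:all_modules}(4) with $\chi = 1$ to see that every irreducible $U$-module has the form $U^{i,\alpha}$ for some $(i,\alpha)\in I\times C$, and then use Theorem \ref{thm:all_modules}(2) to identify when two such modules are isomorphic. Combined with Lemma \ref{lem:onCchi}(1), this tells us that $\Irr(U)$ is indexed by $R(1)/{\approx}$, where $R(1)=\{(i,\alpha) : i\in I, \alpha\in C(i,1)\}$ and $C(i,1)$ is a coset of $D^\perp$ in $C$. Theorem \ref{thm:number_of_irred} then gives $\abs{R(1)/{\approx}} = \abs{C}\abs{\Irr(W)}/\abs{D}^2$.

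Since Theorem \ref{thm:number_of_irred} already proves the formula for every $\chi \in D^*$, there is no new obstacle here; the corollary is purely the specialization at $\chi = 1$. The only point worth recording explicitly is the identification of $1$-twisted modules with untwisted $U$-modules so that the left-hand side is indeed $\abs{\Irr(U)}$.
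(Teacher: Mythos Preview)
Your proposal is correct and matches the paper's approach exactly: the corollary is stated immediately after Theorem \ref{thm:number_of_irred} with no separate proof, so it is simply the specialization at $\chi = 1$, using that $1$-twisted $U$-modules are ordinary $U$-modules. Your added remark making this identification explicit is a reasonable clarification but not a departure from the paper.
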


\begin{remark}
  Since every irreducible $U$-module is $\chi$-stable for $\chi \in D^*$, 
  one can also show Corollary \ref{cor:count} by using the modular invariance of 
  twisted modules \cite[Theorem 10.2]{DLM2000}.
\end{remark}

\subsection{Fusion rules}

We consider the relationship between the fusion rule of $W$-modules and 
that of $U$-modules. 
For $\chi \in D^\ast$, let 
\begin{equation*}
  T(\chi) = \{ (i, \alpha, \beta) \mid i \in I, \alpha \in C(i,\chi), \beta \in D \}, 
\end{equation*}
that is, $T(\chi) = R(\chi) \times D$ by \eqref{eq:R_chi}. 
For $(i, \alpha, \beta)$, $(i', \alpha', \beta') \in T(\chi)$, we write
\[
  (i, \alpha, \beta) \sim (i', \alpha', \beta')
\]
if $W^{i,\beta} \otimes V^{\alpha+\beta} \cong W^{i',\beta'} \otimes V^{\alpha'+\beta'}$ 
as $W \otimes V$-modules. 
We have
$(i, \alpha, \beta) \sim (i', \alpha', \beta')$ if and only if 
$i = i'$, $\beta \equiv \beta' \pmod{D_i}$, and $\alpha+\beta = \alpha'+\beta'$ by 
Lemma \ref{lem:irr_W}. 
The relation $(i, \alpha, \beta) \sim (i', \alpha', \beta')$ 
is an equivalence relation on $T(\chi)$, and $\Irr(W \otimes V)$ is indexed by 
$T(\chi)/{\sim}$, $\chi \in D^\ast$.
\[
  \Irr(W \otimes V) = 
  \{ W^{i,\beta} \otimes V^{\alpha+\beta} \mid (i, \alpha, \beta) \in T(\chi)/{\sim}, \chi \in D^\ast \}.
\]

For $(i,\alpha,\beta) \in T(\chi)$, we have 
$U \boxtimes_{W \otimes V} (W^{i,\beta} \otimes V^{\alpha+\beta}) = U^{i,\alpha}$ 
is an irreducible $\chi$-twisted $U$-module. 
Hence for $(i',\alpha',\beta') \in T(\chi)$, we see that 
\[
  U \boxtimes_{W \otimes V} (W^{i,\beta} \otimes V^{\alpha+\beta}) 
  \cong U \boxtimes_{W \otimes V} (W^{i',\beta'} \otimes V^{\alpha'+\beta'})
\]
as $\chi$-twisted $U$-modules if and only if $(i,\alpha) \approx (i',\alpha')$. 
Note also that $(i, \alpha, \beta) \sim (i', \alpha', \beta')$ implies 
$(i,\alpha) \approx (i',\alpha')$. 

Now, recall the notation 
in Section \ref{subsec:simple_current_ext}. 
Let $\CC_{W \otimes V}$ be the category of $W \otimes V$-modules. 
We denote by $\CC_{W \otimes V}^0$ the full subcategory of $\CC_{W \otimes V}$ 
consisting of the objects $X$ of $\CC_{W \otimes V}$ such that 
$U \boxtimes_{W \otimes V} X$ is an ordinary $U$-module, 
see \cite[Definition 2.66]{CKM2017}. 
Let
\[
  \Irr^0(W \otimes V) 
  = \{ W^{i,\beta} \otimes V^{\alpha+\beta} \mid (i,\alpha,\beta) \in T(1)/{\sim} \},
\]
which constitutes the simple objects of $\CC_{W \otimes V}^0$,  
see also \cite[Proposition 2.65]{CKM2017}.
Let $\CC_U$ be the category of $U$-modules. 
Then Theorem \ref{thm:functor_F} implies the following theorem. 

\begin{theorem}\label{thm:functor_F_on_CWV0}
  The category $\CC_{W \otimes V}^0$ is a $\C$-linear additive braided monoidal category 
  with structures induced from $\CC_{W \otimes V}$, 
  and the functor $F : \CC_{W \otimes V}^0 \to \CC_U$; 
  $X \mapsto U \boxtimes_{W \otimes V} X$ is a braided tensor functor.
\end{theorem}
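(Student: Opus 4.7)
The plan is to deduce this statement directly from Theorem \ref{thm:functor_F} by specialising it to the base vertex operator algebra $W\otimes V$ and its $D$-graded simple current extension $U$. First I would verify that $W\otimes V$ satisfies Hypothesis \ref{hypo_on_VOA}: simplicity, self-duality, rationality, $C_2$-cofiniteness, and CFT-type are all preserved under tensor products of vertex operator algebras satisfying those properties, so $\CC_{W\otimes V}$ is a modular tensor category by Theorem \ref{thm:module_category}. Hypothesis \ref{hypo:W_V}(4) then tells us that $U = \bigoplus_{\beta \in D} W^\beta \otimes V^\beta$ is a $D$-graded simple current extension of $W\otimes V$, where the diagonal elements $W^\beta \otimes V^\beta$ form a totally isotropic $D$-graded system of simple currents in $\CC_{W\otimes V}$ by Proposition \ref{prop:tensor_prod_fusion}. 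This places us exactly inside the framework of Section \ref{subsec:simple_current_ext}, so Theorem \ref{thm:functor_F} yields a braided tensor functor $F : \CC_{W\otimes V}^0 \to \Rep^0 U$, where a priori $\CC_{W\otimes V}^0$ denotes the full subcategory of those $X$ with $U \boxtimes_{W\otimes V} X \in \Rep^0 U$.

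The remaining issue, and the main technical obstacle, is to identify $\Rep^0 U$ with the full category $\CC_U$ of ordinary $U$-modules, so that the two possible definitions of $\CC_{W\otimes V}^0$ agree and the target of $F$ is $\CC_U$. One inclusion is general: by \cite[Theorem 3.4]{HKL2015} (already cited in the paper) an object of $\Rep^0 U$ sitting in $\CC_{W\otimes V}$ carries an ordinary (untwisted) $U$-module structure, so $\Rep^0 U \subseteq \CC_U$. For the reverse inclusion I would appeal to Theorem \ref{thm:all_modules} in the case $\chi = 1$ (principal character): every irreducible ordinary $U$-module is isomorphic to some $U^{i,\alpha} = U \boxtimes_{W\otimes V}(W^{i,0}\otimes V^\alpha) = F(W^{i,0}\otimes V^\alpha)$ with $(i,\alpha) \in R(1)$, so each irreducible $U$-module lies in the image of $F$ and in particular in $\Rep^0 U$. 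Because $U$ inherits rationality from $W\otimes V$ through the theory of simple current extensions (as recalled after Theorem \ref{thm:VOSA}), every object of $\CC_U$ is a finite direct sum of such irreducibles, hence lies in $\Rep^0 U$.

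Once $\Rep^0 U = \CC_U$ is established, the subcategory $\CC_{W\otimes V}^0$ defined in the statement of the theorem (those $X$ with $U\boxtimes_{W\otimes V} X$ an ordinary $U$-module) coincides with the domain of $F$ supplied by Theorem \ref{thm:functor_F}. The fact that $\CC_{W\otimes V}^0$ is a $\C$-linear additive braided monoidal category with structures induced from $\CC_{W\otimes V}$, and that $F : \CC_{W\otimes V}^0 \to \CC_U$ is a braided tensor functor, is then precisely the content of Theorem \ref{thm:functor_F} transported to this setting. No additional verification is required beyond the identification $\Rep^0 U = \CC_U$, which is the only step that genuinely uses the specific structure of $U$ (via Theorem \ref{thm:all_modules}) rather than the general categorical machinery of \cite{CKM2017}.
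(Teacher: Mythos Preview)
Your approach is correct and essentially the same as the paper's: the theorem is obtained by specialising Theorem~\ref{thm:functor_F} to the base $W\otimes V$ and its $D$-graded simple current extension $U$. The one difference is that you treat the identification $\Rep^0 U = \CC_U$ as a separate ``main technical obstacle'' to be verified via Theorem~\ref{thm:all_modules}, whereas the paper has already built this identification into the general framework of Section~\ref{subsec:simple_current_ext} by citing \cite[Theorem~3.4]{HKL2015} (which shows $\Rep^0 V_D$ is exactly the category of ordinary $V_D$-modules), so no further argument is needed at this point.
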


Since the category $\CC_{W \otimes V}^0$ is closed under the fusion product 
$\boxtimes_{W \otimes V}$,  
we have 
\begin{equation*}
\begin{split}
  & (W^{i_1,\beta_1} \otimes V^{\alpha_1+\beta_1}) \boxtimes_{W \otimes V} 
  (W^{i_2,\beta_2} \otimes V^{\alpha_2+\beta_2})\\
  &\qquad = \sum_{(i_3,\alpha_3,\beta_3) \in T(1)/{\sim}} 
  n(i_3,\alpha_3,\beta_3) W^{i_3,\beta_3} \otimes V^{\alpha_3+\beta_3}
\end{split}
\end{equation*}
for $(i_1,\alpha_1,\beta_1)$, $(i_2,\alpha_2,\beta_2) \in T(1)$, where 
\begin{equation*}
  n(i_3,\alpha_3,\beta_3) = \dim
  I_{W\otimes V} 
  \binom{W^{i_3,\beta_3}\otimes V^{\alpha_3+\beta_3}}
  {W^{i_1,\beta_1}\otimes V^{\alpha_1+\beta_1}\quad 
  W^{i_2,\beta_2}\otimes V^{\alpha_2+\beta_2}}.
\end{equation*}

Since $V^{\alpha_1+\beta_1} \boxtimes_V V^{\alpha_2+\beta_2} 
= V^{\alpha_1+\beta_1+\alpha_2+\beta_2}$, we actually have
\begin{equation}\label{eq;fusion_WV-2}
\begin{split}
  & (W^{i_1,\beta_1} \otimes V^{\alpha_1+\beta_1}) \boxtimes_{W \otimes V} 
  (W^{i_2,\beta_2} \otimes V^{\alpha_2+\beta_2})\\
  &\qquad = 
  \sum_{\substack{(i_3,\alpha_3,\beta_3) \in T(1)/{\sim}\\ 
  \alpha_3+\beta_3 = \alpha_1+\beta_1+\alpha_2+\beta_2}} 
  n(i_3,\alpha_3,\beta_3) W^{i_3,\beta_3} \otimes V^{\alpha_3+\beta_3}.
\end{split}
\end{equation}
In fact, 
\begin{equation*}
\begin{split}
  &I_{W\otimes V} 
  \binom{W^{i_3,\beta_3}\otimes V^{\alpha_3+\beta_3}}
  {W^{i_1,\beta_1}\otimes V^{\alpha_1+\beta_1}\quad 
  W^{i_2,\beta_2}\otimes V^{\alpha_2+\beta_2}}\\
  &\qquad \cong I_W \binom{W^{i_3,\beta_3}}{W^{i_1,\beta_1}\quad W^{i_2,\beta_2}}
  \otimes 
  I_V\binom{V^{\alpha_3+\beta_3}}{V^{\alpha_1+\beta_1}\quad V^{\alpha_2+\beta_2}}
\end{split}
\end{equation*}
by Proposition \ref{prop:tensor_prod_fusion}. 
Let 
\begin{equation*}
  n(i_3, \beta_3) = \dim I_W \binom{W^{i_3,\beta_3}}{W^{i_1,\beta_1}\quad W^{i_2,\beta_2}}
\end{equation*}
Then $n(i_3,\alpha_3,\beta_3) = n(i_3, \beta_3)$ 
if $\alpha_3+\beta_3 = \alpha_1+\beta_1+\alpha_2+\beta_2$,  
and $n(i_3,\alpha_3,\beta_3) = 0$ otherwise.

We fix $(i_1,\alpha_1,\beta_1)$, $(i_2,\alpha_2,\beta_2) \in T(1)$. 
For given $i_3 \in I$ and $\beta_3 \in D$, there exists an element 
$\alpha_3 \in C(i_3,1)$ which satisfies the conditions 
\begin{equation}\label{eq:condition_on_a3b3}
  (i_3,\alpha_3,\beta_3) \in T(1)/{\sim} \quad \text{and} \quad 
  \alpha_3+\beta_3 = \alpha_1+\beta_1+\alpha_2+\beta_2
\end{equation}
if and only if $\alpha_1 + \beta_1 + \alpha_2 + \beta_2 - \beta_3 \in C(i_3,1)$. 
Set
\begin{equation}\label{eq:P}
  P = \{ (i_3,\beta_3) \in I \times D \mid 
  \alpha_1 + \beta_1 + \alpha_2 + \beta_2 - \beta_3 \in C(i_3,1)\}.
\end{equation}

Since $C(i_3,1) = \alpha_3 + D^\perp$ by Lemma \ref{lem:onCchi}, 
the condition $\alpha_1+\beta_1+\alpha_2+\beta_2-\beta_3 \in C(i_3,1)$ is equivalent 
to the condition $\alpha_1+\beta_1+\alpha_2+\beta_2-\alpha_3-\beta_3 \in D^\perp$. 
Since $D_{i_3} \subset D^\perp$ by Lemma \ref{lem:Di_D}, 
$P$ is a union of equivalence classes 
with respect to the equivalence relation $\sim$ defined in \eqref{eq:sim}. 
We also note that $(i,\alpha,\beta) \sim (i',\alpha',\beta')$ in $T(1)$ implies 
$(i,\beta) \sim (i',\beta')$ in $I \times D$. 
Therefore, \eqref{eq;fusion_WV-2} can be written as
\begin{equation*}
\begin{split}
  & (W^{i_1,\beta_1} \otimes V^{\alpha_1+\beta_1}) \boxtimes_{W \otimes V} 
  (W^{i_2,\beta_2} \otimes V^{\alpha_2+\beta_2})\\
  &\qquad = 
  \sum_{(i_3,\beta_3) \in P/{\sim}}
  n(i_3,\beta_3) W^{i_3,\beta_3} \otimes V^{\alpha_1+\beta_1+\alpha_2+\beta_2},
\end{split}
\end{equation*}
which implies that
\begin{equation}\label{eq:fusion_W-1}
  W^{i_1,\beta_1} \boxtimes_W W^{i_2,\beta_2} 
  = \sum_{(i_3,\beta_3) \in P/{\sim}} 
  \dim I_W \binom{W^{i_3,\beta_3}}{W^{i_1,\beta_1}\quad W^{i_2,\beta_2}} 
  W^{i_3,\beta_3}.
\end{equation}

Since $U \boxtimes_{W \otimes V} (W^{i,\beta} \otimes V^{\alpha+\beta}) = U^{i,\alpha}$, 
it follows from Theorem \ref{thm:functor_F_on_CWV0} and \eqref{eq;fusion_WV-2} that 
\begin{equation}\label{eq;fusion_WV-3}
U^{i_1,\alpha_1} \boxtimes_U U^{i_2,\alpha_2} 
= \sum_{\substack{(i_3,\alpha_3,\beta_3) \in T(1)/{\sim}\\ 
\alpha_3+\beta_3 = \alpha_1+\beta_1+\alpha_2+\beta_2}} 
n(i_3,\alpha_3,\beta_3) U^{i_3,\alpha_3}.
\end{equation}

For given $i_3 \in I$ and $\alpha_3 \in C(i_3,1)$, there exists an element $\beta_3 \in D$ 
which satisfies the conditions 
\eqref{eq:condition_on_a3b3} 
if and only if $\alpha_1 + \alpha_2 - \alpha_3 \in D$. 
Set
\begin{equation}\label{eq:Q}
  Q = \{ (i_3,\alpha_3)\in I \times C \mid 
  \alpha_3 \in C(i_3,1),~ \alpha_1+\alpha_2-\alpha_3\in D\}.
\end{equation}
Then $Q$ is a union of equivalence classes of the equivalence relation 
$\approx$ defined in \eqref{eq:approx}. 
Since $(i,\alpha,\beta) \sim (i',\alpha',\beta')$ in $T(1)$ implies 
$(i,\alpha) \approx (i',\alpha')$ in $R(1)$, we can write \eqref{eq;fusion_WV-3} as 
\begin{equation}\label{eq;fusion_U-1}
  U^{i_1,\alpha_1} \boxtimes_U U^{i_2,\alpha_2} 
  = \sum_{(i_3,\alpha_3) \in Q/{\approx}} 
  n(i_3, \alpha_1+\beta_1+\alpha_2+\beta_2-\alpha_3) U^{i_3,\alpha_3}.
\end{equation}

For fixed $(i_1,\alpha_1,\beta_1)$, $(i_2,\alpha_2,\beta_2) \in T(1)$, 
define a map $\psi$ by
\begin{equation}\label{eq:psi}
  \psi: P \to Q;\quad
  (i_3,\beta_3) \mapsto (i_3,\alpha_1+\beta_1+\alpha_2+\beta_2-\beta_3).
\end{equation}
The map $\psi$ is a bijection, and its inverse is 
\begin{equation}\label{eq:psi_inv}
  \psi^{-1}: Q\to P;\quad 
  (i_3,\alpha_3) \mapsto (i_3,\alpha_1+\beta_1+\alpha_2+\beta_2-\alpha_3).
\end{equation}

Since $(i,\beta) \sim (i',\beta')$ if and only if 
$\psi(i,\beta) \approx \psi(i',\beta')$ for $(i,\beta)$, $(i',\beta') \in P$, 
the map $\psi$ induces a bijection between $P/{\sim}$ and $Q/{\approx}$. 
Then we see from \eqref{eq;fusion_U-1} that
\begin{equation}\label{eq:fusion_U-2}
  U^{i_1,\alpha_1} \boxtimes_U U^{i_2,\alpha_2} 
  = \sum_{(i_3,\alpha_3) \in Q/{\approx}} 
  \dim I_U\binom{U^{i_3,\alpha_3}}{U^{i_1,\alpha_1} \quad U^{i_2,\alpha_2}} 
  U^{i_3,\alpha_3}
\end{equation}
with
\begin{equation}\label{eq:fusion_rule_U_W}
  \dim I_U\binom{U^{i_3,\alpha_3}}{U^{i_1,\alpha_1} \quad U^{i_2,\alpha_2}} 
  = \dim I_W \binom{W^{\psi^{-1}(i_3,\alpha_3)}}
   {W^{i_1,\beta_1} \quad W^{i_2,\beta_2}}
\end{equation}
for $(i_3,\alpha_3) \in Q/{\approx}$.
Using the bijection $\psi$, we also have
\begin{equation}\label{eq:fusion_rule_W_U}
    \dim I_W\binom{W^{i_3,\beta_3}}{W^{i_1,\beta_1} \quad W^{i_2,\beta_2}}
    = \dim I_U\binom{U^{\psi(i_3,\beta_3)}}
  {U^{i_1,\alpha_1} \quad U^{i_2,\alpha_2}}
\end{equation}
for $(i_3,\beta_3) \in P/{\sim}$.
Therefore, we obtain the following theorem from 
\eqref{eq:fusion_W-1} and \eqref{eq:fusion_U-2} 
by the bijection $\psi$ between $P/{\sim}$ and $Q/{\approx}$. 

\begin{theorem}\label{thm:fusion_products_4_U_W}
  Let $(i_1,\alpha_1,\beta_1)$, $(i_2,\alpha_2,\beta_2) \in T(1)$.
  Then the fusion product $W^{i_1,\beta_1} \boxtimes_W W^{i_2,\beta_2}$ 
  and the fusion product $U^{i_1,\alpha_1} \boxtimes_U U^{i_2,\alpha_2}$ 
  are related as follows.
\begin{equation*}
\begin{split}
  W^{i_1,\beta_1} \boxtimes_W W^{i_2,\beta_2} 
  &= \sum_{(i_3,\alpha_3) \in Q/{\approx}} \dim I_U 
  \binom{U^{i_3,\alpha_3}}{U^{i_1,\alpha_1}\quad U^{i_2,\alpha_2}} W^{\psi^{-1}(i_3,\alpha_3)},\\
  U^{i_1,\alpha_1} \boxtimes_U U^{i_2,\alpha_2} 
  &= \sum_{(i_3,\beta_3) \in P/{\sim}} \dim I_W 
  \binom{W^{i_3,\beta_3}}{W^{i_1,\beta_1} \quad W^{i_2,\beta_2}} U^{\psi(i_3,\beta_3)},
\end{split}
\end{equation*}
where $P$, $Q$, $\psi$, and $\psi^{-1}$ are defined as in 
\eqref{eq:P}, \eqref{eq:Q}, \eqref{eq:psi} 
and \eqref{eq:psi_inv}, respectively.
\end{theorem}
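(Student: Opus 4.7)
The plan is to derive both identities from a single fusion-product computation in $\CC_{W \ot V}$, transferring it in one direction by dropping the $V$-factor (which is forced to a single simple current) and in the other direction by pushing it through the braided tensor functor $F = U \boxtimes_{W \ot V} (-)$ supplied by Theorem \ref{thm:functor_F_on_CWV0}. All the hard machinery, namely rationality of fusion, the functor $F$, and the identification $F(W^{i,\be} \ot V^{\al+\be}) = U^{i,\al}$, is already in place; the task reduces to parametrization and bookkeeping.

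First I would compute the fusion product $(W^{i_1,\be_1} \ot V^{\al_1+\be_1}) \boxtimes_{W \ot V} (W^{i_2,\be_2} \ot V^{\al_2+\be_2})$ inside $\CC_{W \ot V}$. By Proposition \ref{prop:tensor_prod_fusion} the intertwining space splits as a tensor product of a $W$-part and a $V$-part, and since every irreducible $V$-module is a simple current the $V$-part has dimension one exactly when $\al_3+\be_3 = \al_1+\be_1+\al_2+\be_2$ and zero otherwise. This selects precisely the terms whose $(i_3,\be_3)$-indices lie in $P$ as defined in \eqref{eq:P} and reduces the remaining multiplicity to the $W$-fusion number. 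Projecting onto the $W$-factor then yields the first displayed formula.

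Next I would apply $F$ to the same fusion identity in $\CC^0_{W \ot V}$. Since $F$ preserves fusion products by Theorem \ref{thm:functor_F_on_CWV0} and sends $W^{i,\be} \ot V^{\al+\be}$ to the irreducible $U$-module $U^{i,\al}$, this delivers an expression for $U^{i_1,\al_1} \boxtimes_U U^{i_2,\al_2}$ as a direct sum of modules $U^{i_3,\al_3}$, with $(i_3,\al_3)$ ranging over the image of $P$ under the map $\psi$ of \eqref{eq:psi}, which by a direct check equals the set $Q$ of \eqref{eq:Q}. To match the two sums term by term, I would verify that $\psi$ is a bijection with the stated inverse \eqref{eq:psi_inv}, and that $(i,\be) \sim (i',\be')$ in $P$ if and only if $\psi(i,\be) \approx \psi(i',\be')$ in $Q$; the latter uses the explicit description of $\sim$ from Lemma \ref{lem:irr_W}, that of $\approx$ from Theorem \ref{thm:all_modules}(2), and the inclusion $D_i \subset D^\perp$ from Lemma \ref{lem:Di_D}, which ensures that the coset condition defining $Q$ is respected by the equivalence.

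The main obstacle is this final compatibility check: once one knows that $\psi$ descends to a bijection of equivalence classes, both identities become manifestations of the same $W$-intertwining number $\dim I_W \binom{W^{i_3,\be_3}}{W^{i_1,\be_1}\ W^{i_2,\be_2}}$ extracted from the $\CC_{W \ot V}$ calculation, and the two formulas are dual presentations of it via $\psi$ and $\psi^{-1}$. Everything else is formal once the modular-tensor-category framework of Section \ref{subsec:simple_current_ext} and the functor $F$ are invoked.
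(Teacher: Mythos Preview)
Your proposal is correct and follows essentially the same approach as the paper: compute the fusion product in $\CC_{W\otimes V}$ using Proposition~\ref{prop:tensor_prod_fusion} so that the $V$-factor pins down $\alpha_3+\beta_3$, read off the $W$-side decomposition indexed by $P/{\sim}$, then apply the braided tensor functor $F$ of Theorem~\ref{thm:functor_F_on_CWV0} to obtain the $U$-side decomposition indexed by $Q/{\approx}$, and finally check that $\psi$ descends to a bijection $P/{\sim}\to Q/{\approx}$ via Lemma~\ref{lem:irr_W}, Theorem~\ref{thm:all_modules}(2), and Lemma~\ref{lem:Di_D}. This is exactly the paper's argument, including the role of $D_i\subset D^\perp$ in ensuring that $P$ and $Q$ are unions of equivalence classes.
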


\begin{corollary}\label{cor:SCU}
  Let $I_{\mathrm{sc}} = \{ i\in I \mid W^{i,0}\in \SC{W}\}$.
  Then
  \[
    \SC{U} = \{ U^{i,\alpha} \mid i \in I_{\mathrm{sc}},~ \alpha \in C(i,1)\},
  \]
  and $\abs{\SC{U}} = \abs{C}\abs{\SC{W}}/\abs{D}^2$.
\end{corollary}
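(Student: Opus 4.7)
The plan is to transfer the simple-current property across the extension via Theorem \ref{thm:fusion_products_4_U_W}, then count using the description of $\Irr(U)$ and $C(i,1)$.

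First I would fix $(i_1,\alpha_1), (i_2,\alpha_2) \in R(1)$ and apply Theorem \ref{thm:fusion_products_4_U_W} with $\beta_1 = \beta_2 = 0$. The $U$-fusion product $U^{i_1,\alpha_1} \boxtimes_U U^{i_2,\alpha_2}$ and the $W$-fusion product $W^{i_1,0} \boxtimes_W W^{i_2,0}$ share the same multiset of multiplicities, indexed by $P/{\sim}$ and $Q/{\approx}$ respectively via the bijection $\psi$; consequently, the former is irreducible if and only if the latter is. Since $\{W^\gamma\}_{\gamma \in D}$ are simple currents, tensoring with $W^\gamma$ preserves irreducibility, so $W^{i_1,0} \boxtimes_W W^{i_2,0}$ being irreducible for every $i_2 \in I$ is equivalent to $W^{i_1,0} \boxtimes_W X$ being irreducible for every $X \in \Irr(W)$, i.e., $W^{i_1,0} \in \SC{W}$. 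Combining these two observations shows $U^{i_1,\alpha_1} \in \SC{U}$ if and only if $i_1 \in I_{\mathrm{sc}}$, independently of the choice of $\alpha_1 \in C(i_1,1)$. This establishes the set-theoretic description of $\SC{U}$.

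For the cardinality, by Theorem \ref{thm:all_modules} the classes $U^{i,\alpha}$ with $i \in I_{\mathrm{sc}}$ are indexed by pairs $(i,\alpha)$ with $\alpha \in C(i,1)$, modulo $(i,\alpha) \approx (i,\alpha')$ iff $\alpha \equiv \alpha' \pmod{D_i}$. Since $C(i,1)$ is a coset of $D^\perp$ by Lemma \ref{lem:onCchi},
\[
  \abs{\SC{U}} = \sum_{i\in I_{\mathrm{sc}}} \frac{\abs{D^\perp}}{\abs{D_i}}, \qquad \abs{\SC{W}} = \sum_{i \in I_{\mathrm{sc}}} \abs{\mathcal{O}^i_W} = \sum_{i \in I_{\mathrm{sc}}} \frac{\abs{D}}{\abs{D_i}}.
\]
Using $\abs{C} = \abs{D} \abs{D^\perp}$ (which holds since $b_V$ is non-degenerate by Corollary \ref{cor:bilinear}), the ratio of these sums gives $\abs{\SC{U}} = \abs{C}\abs{\SC{W}}/\abs{D}^2$.

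The principal obstacle is conceptual rather than computational: the identity in Theorem \ref{thm:fusion_products_4_U_W} a priori depends on the representatives $(\alpha_1,\beta_1,\alpha_2,\beta_2)$, so one must verify that whether the fusion product is irreducible is intrinsic to the classes $(i_1,\alpha_1)$, $(i_2,\alpha_2)$ and in fact depends only on $i_1$; this is precisely what the matching of multiplicities via the bijection $\psi$ guarantees.
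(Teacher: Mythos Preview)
Your proposal is correct and follows the same approach as the paper: both use Theorem~\ref{thm:fusion_products_4_U_W} to match the multiplicities in $U^{i_1,\alpha_1}\boxtimes_U U^{i_2,\alpha_2}$ with those in $W^{i_1,0}\boxtimes_W W^{i_2,0}$ via the bijection $\psi$, concluding that simple currents correspond exactly to $i\in I_{\mathrm{sc}}$.

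The only notable difference is in the count. The paper observes directly that $D_i=0$ for $i\in I_{\mathrm{sc}}$ (since $\SC{W}$ is a group under $\boxtimes_W$ containing $\{W^\beta\mid\beta\in D\}$, so the action of $D$ on simple currents by fusion is free), giving $\abs{\SC{W}}=\abs{I_{\mathrm{sc}}}\,\abs{D}$ and $\abs{\SC{U}}=\abs{I_{\mathrm{sc}}}\,\abs{D^\perp}$ immediately. Your ratio argument is also valid---each summand in the $\SC{U}$ sum is $\abs{D^\perp}/\abs{D}$ times the corresponding summand in the $\SC{W}$ sum---but noting $D_i=0$ is cleaner and worth including.
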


\begin{proof}
Any simple current $W$-module is isomorphic to $W^{i,\beta}$ for some 
$i \in I_{\mathrm{sc}}$ and $\beta \in D$. 
Thus any simple current $U$-module is isomorphic to $U^{i,\alpha}$ 
for some $i \in I_{\mathrm{sc}}$ and $\alpha \in C(i,1)$ 
by Theorem \ref{thm:fusion_products_4_U_W}.
We have $D_i = 0$ for $i \in I_{\mathrm{sc}}$. 
Therefore, $\abs{\SC{W}} = \abs{I_{\mathrm{sc}}} \abs{D}$ and
$\abs{\SC{U}} = \abs{I_{\mathrm{sc}}} \abs{D^\perp}$. 
Thus the assertion holds.
\end{proof}

Let 
\begin{equation}\label{eq:Ugamma}
  U^\gamma = U^{0,\gamma} = \bigoplus_{\beta \in D} W^\beta \otimes V^{\gamma+\beta}
\end{equation}
for $\gamma \in D^\perp = C(0,1)$.

\begin{corollary}\label{cor:canonical_SC}
  $\{ U^\gamma \mid \gamma \in D^\perp\}$ is a $D^\perp$-graded set of 
  simple current $U$-modules.
  Furthermore, $U^\gamma \boxtimes_U U^{i,\alpha}=U^{i,\alpha+\gamma}$ for 
  $i \in I$ and $\alpha \in C(i,1)$.
\end{corollary}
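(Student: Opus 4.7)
The plan is to deduce both claims directly from Theorem \ref{thm:fusion_products_4_U_W} by specializing one factor to $U^{0,\gamma}$, exploiting that $W^{0,0}=W$ is the fusion identity among $W$-modules.

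First, I would observe that $0 \in I_{\mathrm{sc}}$ since $W^{0,0}=W$ is trivially a simple current $W$-module. Corollary \ref{cor:SCU} then immediately yields that every $U^\gamma = U^{0,\gamma}$ with $\gamma \in C(0,1) = D^\perp$ is a simple current $U$-module. To check pairwise inequivalence, I would note that the stabilizer $D_0 = \{\beta \in D \mid W^\beta \cong W\}$ equals $\{0\}$, because the $W^\beta$ for $\beta \in D$ are pairwise inequivalent by Hypothesis \ref{hypo:W_V}(3). Applying Theorem \ref{thm:all_modules}(2) with $\chi$ the trivial character gives $U^{0,\gamma}\cong U^{0,\gamma'}$ (as $U$-modules) if and only if $\gamma = \gamma'$, so the family $\{U^\gamma \mid \gamma \in D^\perp\}$ consists of $|D^\perp|$ distinct simple current $U$-modules.

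Next, I would compute the fusion product $U^{0,\gamma} \boxtimes_U U^{i,\alpha}$ via Theorem \ref{thm:fusion_products_4_U_W} applied to $(i_1,\alpha_1,\beta_1) = (0,\gamma,0)$ and $(i_2,\alpha_2,\beta_2)=(i,\alpha,0)$. Since $W^{0,0}=W$ is the unit in the fusion algebra of $W$, the fusion coefficient $\dim I_W\binom{W^{i_3,\beta_3}}{W\ W^{i,0}}$ vanishes unless $W^{i_3,\beta_3}\cong W^{i,0}$, that is, unless $i_3=i$ and $\beta_3 \in D_i$, in which case the dimension equals $1$. Since $D_i\subset D^\perp$ by Lemma \ref{lem:Di_D} and $\gamma\in D^\perp$, the required condition $\alpha+\gamma-\beta_3 \in C(i,1)=\alpha+D^\perp$ defining membership in $P$ is automatic for every $\beta_3\in D_i$; moreover all such pairs $(i,\beta_3)$ collapse to a single $\sim$-class in $P/{\sim}$, namely the class of $(i,0)$. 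The corresponding term is $U^{\psi(i,0)} = U^{i,\alpha+\gamma}$, so
\begin{equation*}
U^{0,\gamma} \boxtimes_U U^{i,\alpha} \;=\; U^{i,\alpha+\gamma}.
\end{equation*}

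Specializing the second assertion to $(i,\alpha)=(0,\gamma')$ with $\gamma'\in D^\perp$ yields $U^{\gamma}\boxtimes_U U^{\gamma'}=U^{\gamma+\gamma'}$; combined with $U^{0}=U^{0,0}=U$ and the inequivalence established above, this gives the $D^\perp$-graded simple current structure. I do not anticipate a serious obstacle; the only delicate point is bookkeeping of the equivalence relations $\sim$ on $P$ and $\approx$ on $Q$, and verifying via $D_i\subset D^\perp$ that the sum in Theorem \ref{thm:fusion_products_4_U_W} collapses to one term with coefficient $1$.
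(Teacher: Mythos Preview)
Your proof is correct and follows essentially the same approach as the paper's. The paper's two-sentence argument invokes Corollary~\ref{cor:SCU} for the simple current and inequivalence claims, and then Theorem~\ref{thm:fusion_products_4_U_W} together with $W^\beta \boxtimes_W W^{i,\delta}=W^{i,\delta+\beta}$ for the fusion formula; you unpack exactly these steps, making the $P/{\sim}$ bookkeeping and the use of $D_i\subset D^\perp$ explicit.
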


\begin{proof}
All $U^\gamma$, $\gamma\in D^\perp$ are inequivalent simple current $U$-modules 
by Corollary \ref{cor:SCU}. 
Since $W^\beta \boxtimes_W W^{i,\delta}=W^{i,\delta+\beta}$, 
the assertion holds by Theorem \ref{thm:fusion_products_4_U_W}.
\end{proof}

\begin{remark}\label{rem:underSC}
  Let $X\in \Irr(W)$ and $M$, $N\in \Irr(U)$.
  Suppose $X\subset M$.
  Then it follows from Corollary \ref{cor:canonical_SC} that $N$ contains 
  an irreducible $W$-submodule isomorphic to $X$ if and only if 
  $N\cong U^{\gamma}\boxtimes_U M$ for some $\gamma\in D^\perp$.
\end{remark}

The next lemma follows from Lemma \ref{lem:Atensor}, \eqref{eq:def_Wib+}, and 
Corollary \ref{cor:canonical_SC}.

\begin{lemma}\label{lem:fusion_shift}
  Let $i_p \in I$, $p = 1,2,3$.

  \textup{(1)} 
  For $\beta_p$, $\delta_p \in D$, $p = 1,2,3$, we have
  \[
    \dim I_W \binom{W^{i_3,\beta_3+\delta_3}}{W^{i_1,\beta_1+\delta_1}\quad W^{i_2,\beta_2+\delta_2}}
    = \dim I_W \binom{W^{i_3,\beta_3-\delta_1-\delta_2+\delta_3}}
   {W^{i_1,\beta_1}\quad W^{i_2,\beta_2}}.
  \]

  \textup{(2)} 
  For $\alpha_p \in C(i_p,1)$ and $\gamma_p \in D^\perp$, $p = 1,2,3$, 
  we have
  \[
    \dim I_U \binom{U^{i_3,\alpha_3+\gamma_3}}
   {U^{i_1,\alpha_1+\gamma_1} \quad U^{i_2,\alpha_2+\gamma_2}}
    = \dim I_U \binom{U^{i_3,\alpha_3-\gamma_1-\gamma_2+\gamma_3}}
   {U^{i_1,\alpha_1}\quad U^{i_2,\alpha_2}}.
  \]
\end{lemma}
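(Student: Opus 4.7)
The plan is to reduce both parts of the lemma to iterated applications of Lemma \ref{lem:Atensor} once we express the shifts $\delta_p$ (respectively $\gamma_p$) as fusion with simple currents.

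For part (1), I would first use \eqref{eq:def_Wib+} together with the associativity of $\boxtimes_W$ to write
\[
  W^{i_p,\beta_p+\delta_p}
  = W^{\beta_p+\delta_p} \boxtimes_W W^{i_p,0}
  = W^{\delta_p} \boxtimes_W (W^{\beta_p} \boxtimes_W W^{i_p,0})
  = W^{\delta_p} \boxtimes_W W^{i_p,\beta_p},
\]
and similarly $W^{i_3,\beta_3+\delta_3} = W^{\delta_1} \boxtimes_W W^{i_3,\beta_3+\delta_3-\delta_1}$. Since $W^{\delta_1} \in \SC{W}$, Lemma \ref{lem:Atensor} applied with $A = W^{\delta_1}$, $M^1 = W^{i_1,\beta_1}$, $M^2 = W^{i_2,\beta_2+\delta_2}$, and $M^3 = W^{i_3,\beta_3+\delta_3-\delta_1}$ moves the $\delta_1$-shift off the first entry and off the top, leaving
\[
  \dim I_W \binom{W^{i_3,\beta_3+\delta_3}}{W^{i_1,\beta_1+\delta_1}\,W^{i_2,\beta_2+\delta_2}}
  = \dim I_W \binom{W^{i_3,\beta_3+\delta_3-\delta_1}}{W^{i_1,\beta_1}\,W^{i_2,\beta_2+\delta_2}}.
\]
A second application of Lemma \ref{lem:Atensor} with $A = W^{\delta_2}$ removes the $\delta_2$-shift from the second entry in the same manner, yielding the desired equality with top entry $W^{i_3,\beta_3-\delta_1-\delta_2+\delta_3}$.

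For part (2), the argument is formally identical, but carried out inside the category of $U$-modules. Corollary \ref{cor:canonical_SC} supplies the required simple current $U$-modules: for $\gamma \in D^\perp$, $U^\gamma$ is a simple current and $U^\gamma \boxtimes_U U^{i_p,\alpha_p} = U^{i_p,\alpha_p+\gamma}$, and hence also $U^{i_3,\alpha_3+\gamma_3} = U^{\gamma_1} \boxtimes_U U^{i_3,\alpha_3+\gamma_3-\gamma_1}$. Applying Lemma \ref{lem:Atensor} (which holds for any vertex operator algebra satisfying Hypothesis \ref{hypo_on_VOA}, in particular for $U$) first with $A = U^{\gamma_1}$ and then with $A = U^{\gamma_2}$ migrates the shifts $\gamma_1, \gamma_2$ out of the input entries at the cost of subtracting them from the output shift, producing $U^{i_3,\alpha_3-\gamma_1-\gamma_2+\gamma_3}$ on top.

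There is no real obstacle here: the content lies entirely in having the right simple currents at hand. The only small bookkeeping is making sure that all indices $\alpha_p + \gamma_p$ remain in the appropriate cosets $C(i_p,1)$ so that $U^{i_p,\alpha_p+\gamma_p}$ is a legitimate irreducible (ordinary) $U$-module, which is guaranteed by $\gamma_p \in D^\perp$ and Lemma \ref{lem:onCchi}.
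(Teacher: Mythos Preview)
Your proposal is correct and is exactly the argument the paper has in mind: the paper states that the lemma follows from Lemma \ref{lem:Atensor}, \eqref{eq:def_Wib+}, and Corollary \ref{cor:canonical_SC}, and you have simply unpacked those two applications of Lemma \ref{lem:Atensor} explicitly. The bookkeeping remark about $\alpha_p+\gamma_p\in C(i_p,1)$ via Lemma \ref{lem:onCchi}, and the implicit use of the fact that $U$ itself satisfies Hypothesis \ref{hypo_on_VOA}, are the only points needing mention, and you have covered both.
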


For later use, 
we rewrite \eqref{eq:fusion_rule_U_W} and \eqref{eq:fusion_rule_W_U} 
in the following form.

\begin{theorem}\label{thm:fusion_rules2}
  Let $X^p \in \Irr(W)$ and $M^p\in \Irr(U)$, and suppose 
  $X^p\otimes V^{\alpha_p}\subset M^p$ for some $\alpha_p \in C$, $p=1,2,3$.
  Let $\gamma = \alpha_1+\alpha_2-\alpha_3$.

  \textup{(1)} If $\gamma \in D$ then
  \[
    \dim I_U\binom{M^3}{M^1 \quad M^2} 
    = \dim I_W\binom{W^{\gamma} \boxtimes_W X^3}{X^1 \quad X^2},
  \]
  and otherwise 
  \[
    \dim I_U\binom{M^3}{M^1 \quad M^2} = 0.
  \]

  \textup{(2)} If $\gamma \in D^\perp$ then 
  \[
    \dim I_W\binom{X^3}{X^1 \quad X^2} 
    = \dim I_U\binom{U^{\gamma} \boxtimes_U M^3}{M^1 \quad M^2},
  \]
  and otherwise 
  \[
    \dim I_W\binom{X^3}{X^1 \quad X^2} =0.
  \]
\end{theorem}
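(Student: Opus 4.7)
The plan is to reduce Theorem \ref{thm:fusion_rules2} directly to the identities \eqref{eq:fusion_rule_U_W} and \eqref{eq:fusion_rule_W_U} by a change of index notation. First I would set up matching notation: write $X^p \cong W^{i_p, \beta_p}$ and $M^p \cong U^{i_p, \tilde\alpha_p}$ with $\tilde\alpha_p \in C(i_p, 1)$. The hypothesis $X^p \otimes V^{\alpha_p} \subset M^p$ together with \eqref{eq:def_Uia+} lets me choose representatives satisfying $\tilde\alpha_p = \alpha_p - \beta_p$; this choice is unique only modulo $D_{i_p}$, but the ambiguity is harmless because $U^{i_p, \tilde\alpha_p}$ is invariant under such shifts by Theorem \ref{thm:all_modules}(2). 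A short direct computation then gives
\[
\tilde\alpha_1 + \tilde\alpha_2 - \tilde\alpha_3 = \gamma - \beta_1 - \beta_2 + \beta_3,\qquad
\tilde\alpha_1 + \beta_1 + \tilde\alpha_2 + \beta_2 - \beta_3 = \tilde\alpha_3 + \gamma,
\]
so the condition $(i_3, \tilde\alpha_3) \in Q$ in \eqref{eq:Q} is equivalent to $\gamma \in D$, and the condition $(i_3, \beta_3) \in P$ in \eqref{eq:P} is equivalent to $\gamma \in D^\perp$.

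For part (1) under $\gamma \in D$, I would invoke \eqref{eq:fusion_rule_U_W}: substituting $\tilde\alpha_p = \alpha_p - \beta_p$ collapses the target $W$-index to $\gamma + \beta_3$, and \eqref{eq:def_Wib+} identifies $W^{i_3, \gamma + \beta_3} = W^\gamma \boxtimes_W W^{i_3, \beta_3} = W^\gamma \boxtimes_W X^3$, yielding the claimed formula. When $\gamma \notin D$, the pair $(i_3, \tilde\alpha_3)$ lies outside $Q$, so $M^3 \cong U^{i_3, \tilde\alpha_3}$ does not appear in the decomposition \eqref{eq;fusion_U-1} of $M^1 \boxtimes_U M^2$, forcing $\dim I_U \binom{M^3}{M^1\ M^2} = 0$. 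Part (2) is the mirror argument: for $\gamma \in D^\perp$, apply \eqref{eq:fusion_rule_W_U}, compute the resulting $U$-target index as $\tilde\alpha_3 + \gamma$, and use Corollary \ref{cor:canonical_SC} to rewrite $U^{i_3, \tilde\alpha_3 + \gamma} = U^\gamma \boxtimes_U U^{i_3, \tilde\alpha_3} = U^\gamma \boxtimes_U M^3$; when $\gamma \notin D^\perp$, $(i_3, \beta_3)$ lies outside $P$, so $X^3 \cong W^{i_3, \beta_3}$ is absent from \eqref{eq:fusion_W-1}, giving $\dim I_W \binom{X^3}{X^1\ X^2} = 0$.

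Since the substantive content has already been proved in Theorem \ref{thm:fusion_products_4_U_W}, I do not expect any genuine obstacle beyond careful bookkeeping of the two parametrizations. The only delicate point is matching the fixed operand indices $(\alpha_p, \beta_p)$ used in Section~\ref{sec:general} with the $\alpha_p$ of the present statement via the shift $\tilde\alpha_p = \alpha_p - \beta_p$; once this translation is made, the formulas land exactly on $W^\gamma \boxtimes_W X^3$ and $U^\gamma \boxtimes_U M^3$, and the vanishing cases follow from the fact that $P$ and $Q$ index precisely the summands appearing in \eqref{eq:fusion_W-1} and \eqref{eq;fusion_U-1}.
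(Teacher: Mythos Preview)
Your proposal is correct and follows essentially the same route as the paper: both reduce the statement to the already-established identities \eqref{eq:fusion_rule_U_W} and \eqref{eq:fusion_rule_W_U} via a translation of indices. The only cosmetic difference is that the paper exploits the freedom in choosing the orbit representatives $W^{i,0}$ to normalize $\beta_p = 0$ (and then appeals to Lemma~\ref{lem:fusion_shift} for the general case), whereas you carry general $\beta_p$ through the computation directly via the shift $\tilde\alpha_p = \alpha_p - \beta_p$; your bookkeeping is arguably more transparent, but the substance is identical.
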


Indeed, since $W^{i,0}$ is an arbitrary chosen representative of the $D$-orbit
$\mathcal{O}^i_W$ in $\Irr(W)$, we may take 
$X^p = W^{i_p,0}$ for some $i_p \in I$. 
Then $M^p = U^{i_p,\alpha_p}$, $p = 1,2,3$.  
Apply \eqref{eq:fusion_rule_U_W} and \eqref{eq:fusion_rule_W_U} 
with $\beta_1 = \beta_2 = \beta_3 = 0$. 
Then Theorem \ref{thm:fusion_rules2} follows from 
\eqref{eq:def_Wib+}, Corollary \ref{cor:canonical_SC},  and 
Lemma \ref{lem:fusion_shift}.

\subsection{$R(U)$ versus $R(W)$}\label{subsec:RU_RW}

In this subsection, we discuss the relationship between the fusion algebras $R(U)$ and $R(W)$. 
In view of Corollary \ref{cor:canonical_SC} 
we define an action of $D^\perp$ on $\Irr(U)$ by 
\begin{equation}\label{eq:Dperpaction}
  M \mapsto U^\gamma \boxtimes_U M
\end{equation}
for $\gamma \in D^\perp$ and $M \in \Irr(U)$.
Then for each $i \in I$, the set 
\begin{equation}\label{eq:OUi}
  \mathcal{O}^i_U = \{ U^{i,\alpha} \mid \alpha \in C(i,1)\}
\end{equation}
forms a $D^\perp$-orbit by Lemma \ref{lem:onCchi}, 
and we obtain the $D^\perp$-orbit decomposition
\begin{equation}\label{eq:OU}
  \Irr(U) = \bigcup_{i\in I} \mathcal{O}^i_U .
\end{equation}
That is, we can use the same index set $I$ 
as in the $D$-orbit decomposition \eqref{eq:OW} of $\Irr(W)$. 
Note that $\mathcal{O}^0_U = \{ U^\gamma \mid \gamma \in D^\perp\}$ 
as $C(0,1) = D^\perp$.

We consider another description of the $D^\perp$-orbit $\mathcal{O}^i_U$ in $\Irr(U)$.
For each $i \in I$, we pick $M^{i,0} \in \mathcal{O}_U^i$ and fix it, 
where we choose $M^{0,0} = U$.
Moreover, we set
\begin{equation*}
  M^{i,\gamma} = U^\gamma \boxtimes_U M^{i,0}
\end{equation*}
for $\gamma \in D^\perp$.
Then $\mathcal{O}^i_U = \{ M^{i,\gamma} \mid \gamma \in D^\perp\}$. 
Let
\begin{equation*}
  (D^\perp)_i = \{ \gamma \in D^\perp \mid U^\gamma \boxtimes_U M \cong M  
  \mbox{ for } M \in \mathcal{O}^i_U\}.
\end{equation*}

Since $D^\perp$ is abelian, $(D^\perp)_i$ is the stabilizer of $M$ for 
any $M \in \mathcal{O}^i_U$. 
Therefore, the length of the orbit $\mathcal{O}^i_U$ is $[D^\perp : (D^\perp)_i]$, and 
\[
  \Irr(U) = \{ M^{i,\gamma} \mid i \in I, \gamma \in D^\perp/(D^\perp)_i\}.
\]

\begin{lemma}\label{lem:DiperpDi}
  $(D^\perp)_i = D_i$ for $i\in I$.
\end{lemma}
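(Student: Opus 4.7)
The plan is to prove the two inclusions $D_i \subset (D^\perp)_i$ and $(D^\perp)_i \subset D_i$ by a direct translation. The key observation is that Corollary \ref{cor:canonical_SC} describes the $D^\perp$-action on $\mathcal{O}^i_U$ as a shift in the second index, and then Theorem \ref{thm:all_modules}(2) detects isomorphism of the resulting irreducible $U$-modules in terms of the subgroup $D_i$. Since Lemma \ref{lem:Di_D} gives $D_i \subset D^\perp$, both $(D^\perp)_i$ and $D_i$ live inside the same ambient group, so the comparison is meaningful.

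First I would fix any representative $M \in \mathcal{O}^i_U$. By \eqref{eq:OUi} we may write $M = U^{i,\alpha}$ for some $\alpha \in C(i,1)$. For $\gamma \in D^\perp$, Corollary \ref{cor:canonical_SC} yields $U^\gamma \boxtimes_U U^{i,\alpha} = U^{i,\alpha+\gamma}$, and $\alpha + \gamma \in C(i,1)$ since $C(i,1)$ is a coset of $D^\perp$ by Lemma \ref{lem:onCchi}. Hence $\gamma \in (D^\perp)_i$ if and only if $U^{i,\alpha+\gamma} \cong U^{i,\alpha}$ as $U$-modules. By Theorem \ref{thm:all_modules}(2), this occurs precisely when $\alpha + \gamma \equiv \alpha \pmod{D_i}$, that is, $\gamma \in D_i$. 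Combining with $D_i \subset D^\perp$ from Lemma \ref{lem:Di_D}, we obtain $(D^\perp)_i = D^\perp \cap D_i = D_i$.

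I do not expect any real obstacle: once Corollary \ref{cor:canonical_SC} and Theorem \ref{thm:all_modules}(2) are in place, the argument collapses to a single chain of equivalences. The only small point to verify is that the stabilizer $(D^\perp)_i$ is genuinely independent of the chosen representative $M \in \mathcal{O}^i_U$, which follows from the abelianness of $D^\perp$ (already recorded in the paragraph preceding the lemma), so any $\alpha \in C(i,1)$ can be used without loss of generality.
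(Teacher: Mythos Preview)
Your argument is correct and is exactly the approach the paper takes: the paper's proof simply cites Lemma~\ref{lem:Di_D}, Theorem~\ref{thm:all_modules}(2), Corollary~\ref{cor:canonical_SC}, and \eqref{eq:OUi}, and you have spelled out precisely how these combine. Your additional invocation of Lemma~\ref{lem:onCchi} to keep $\alpha+\gamma$ in $C(i,1)$ is a reasonable sanity check but not strictly needed, since Theorem~\ref{thm:all_modules}(2) is stated for arbitrary $(i,\alpha)\in I\times C$.
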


\begin{proof} 
The asserton follows from Lemma \ref{lem:Di_D}, (2) of Theorem \ref{thm:all_modules}, 
Corollary \ref{cor:canonical_SC}, and \eqref{eq:OUi}.
\end{proof}

Next, we describe $\Irr(W)$.
By (3) of Theorem \ref{thm:all_modules}, 
any irreducible $W$-module appears as a submodule of 
an irreducible $U$-module.
Let $M^{i,\gamma} \in \mathcal{O}^i_U$ with $\gamma \in D^\perp$.
Then $M^{i,\gamma} \cong U^{i,\alpha}$ for some $\alpha \in C(i,1)$, and 
this $\alpha$ is uniquely determined modulo $D_i = (D^\perp)_i$.  
By the structure \eqref{eq:def_Uia+} of irreducible $U$-modules, 
there exists a coset $\lambda(i,\gamma)+D \in C/D$ such that 
$M^{i,\gamma}$ has a decomposition of the form
\begin{equation}\label{eq:Xigammad}
  M^{i,\gamma} = \bigoplus_{\delta \in \lambda(i,\gamma)+D} X^{i,\gamma,\delta} \otimes V^\delta
\end{equation}
as a $W\otimes V$-module, where $X^{i,\gamma,\delta} \in \Irr(W)$ 
is the multiplicity of $V^\delta$ in $M^{i,\gamma}$. 
The set $\{ X^{i,\gamma,\delta} \mid \delta\in \lambda(i,\gamma)+D\}$ 
is independent of $\gamma \in D^\perp$, and it coincides with 
the $D$-orbit $\mathcal{O}^i_W = \{ W^{i,\beta} \mid \beta \in D\}$ in $\Irr(W)$ 
in \eqref{eq:OW}. 
In other words, 
the $D$-orbit $\mathcal{O}^i_W$ is uniquely determined by the $D^\perp$-orbit 
$\mathcal{O}^i_U$ as we mentioned in Remark \ref{rem:underSC}.
Moreover, since $D_i = (D^\perp)_i$, 
we have $X^{i,\gamma,\delta} \cong X^{i,\gamma,\delta'}$ as $W$-modules
if and only if $\delta \equiv \delta' \pmod{(D^\perp)_i}$. 
Therefore, we obtain another description of $\Irr(W)$ 
as follows.

\begin{theorem}\label{thm:irrWbyU}
  Define a $D^\perp$-graded set of simple currents 
  $\{ U^\gamma \mid \gamma \in D^\perp\} \subset \SC{U}$ as in \eqref{eq:Ugamma}, 
  and consider the $D^\perp$-orbit decomposition of $\Irr(U)$ as in \eqref{eq:OU}.
  Pick $M^{i,0}\in \mathcal{O}^i_U$ for each $i\in I$,  
  and collect $X^{i,0,\delta} \in \Irr(W)$ in the decomposition \eqref{eq:Xigammad} of 
  $M^{i,0}$. 
  Then $\Irr(W) = \{ X^{i,0,\delta} \mid i \in I, \delta \in \lambda(i,0)+D/(D^\perp)_i\}$.
\end{theorem}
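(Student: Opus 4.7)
The plan is to combine the $D^\perp$-orbit picture on $\Irr(U)$ from \eqref{eq:OU} with the explicit $W \otimes V$-decomposition of each $U^{i,\alpha}$ given in \eqref{eq:def_Uia+}, and then verify that each irreducible $W$-module appears exactly once in the proposed parameter set.

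First I would observe that every $X \in \Irr(W)$ arises as a $W$-constituent of some $M \in \Irr(U)$. By \eqref{eq:irr_W} one may write $X \cong W^{i,\beta}$ for some $i \in I$ and $\beta \in D$, and \eqref{eq:def_Uia+} shows that the $W \otimes V$-submodule $W^{i,\beta} \otimes V^{\alpha + \beta}$ of $U^{i,\alpha}$ witnesses this for any $\alpha \in C(i,1)$. Choosing $\alpha$ so that $M^{i,0} = U^{i,\alpha}$, one obtains $X \cong X^{i,0,\alpha+\beta}$ with $\alpha + \beta \in \alpha + D = \lambda(i,0) + D$, so every element of $\Irr(W)$ appears in the proposed list.

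Next I would argue that irreducible $U$-modules lying in different $D^\perp$-orbits contribute disjoint collections of $W$-constituents. This is precisely the content of Remark \ref{rem:underSC}: if some $X \in \Irr(W)$ occurs as a $W$-submodule of both $M, N \in \Irr(U)$, then $N \cong U^\gamma \boxtimes_U M$ for some $\gamma \in D^\perp$, so $M$ and $N$ share a common $D^\perp$-orbit and in particular a common index $i \in I$. Thus the sets $\{X^{i,0,\delta}\}_\delta$ for distinct $i$ are pairwise disjoint, and the index $i$ in the theorem may indeed be identified with the index $i$ in \eqref{eq:OW}.

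Finally, I would settle the indexing within a fixed orbit. With $M^{i,0} = U^{i,\alpha}$, the decomposition \eqref{eq:def_Uia+} gives $X^{i,0,\delta} \cong W^{i,\delta - \alpha}$ for $\delta \in \lambda(i,0)+D$. Lemma \ref{lem:irr_W} then yields $X^{i,0,\delta} \cong X^{i,0,\delta'}$ iff $\delta - \alpha \equiv \delta' - \alpha \pmod{D_i}$, i.e.\ iff $\delta \equiv \delta' \pmod{D_i}$, and Lemma \ref{lem:DiperpDi} lets us replace $D_i$ by $(D^\perp)_i$, giving the asserted parametrization. The main obstacle I expect is the bookkeeping behind the compatibility of the two descriptions of $\mathcal{O}^i_W$: one must confirm that the set of $W$-constituents of $M^{i,\gamma}$ is independent of $\gamma \in D^\perp/(D^\perp)_i$, so that $\lambda(i,\gamma)+D$ shifts consistently under fusion with $U^\gamma$. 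This should follow from Corollary \ref{cor:canonical_SC} together with Lemma \ref{lem:Atensor}, since fusion by $U^\gamma$ permutes the $V^\delta$-isotypic components while preserving the $W$-orbit structure, but making this precise is the one place where the interaction between Remark \ref{rem:underSC} and the structure of $U^\gamma \boxtimes_U U^{i,\alpha}$ needs to be handled carefully.
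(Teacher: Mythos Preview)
Your proposal is correct and follows essentially the same route as the paper: the paper's argument (given in the paragraph immediately preceding the theorem) likewise uses the explicit decomposition \eqref{eq:def_Uia+} to identify the $W$-constituents of $M^{i,0}$ with the $D$-orbit $\mathcal{O}^i_W$, invokes Remark \ref{rem:underSC} for the orbit correspondence, and uses Lemma \ref{lem:DiperpDi} to convert $D_i$ to $(D^\perp)_i$ in the parametrization. Your ``main obstacle'' about $\gamma$-independence is not actually needed for the theorem as stated, since only $\gamma=0$ appears; the paper records this independence as a side remark about the orbit correspondence rather than as an ingredient of the proof.
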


The above theorem and (2) of Theorem \ref{thm:fusion_rules2} imply that 
the structure of $R(W)$ is determined by those of $R(U)$ and $R(V)$. 

We summarize the outcomes of Hypothesis \ref{hypo:W_V} as follows. 
Let $V$, $W$, $C$, $D$, and $U$ be as in Hypothesis \ref{hypo:W_V}. 
We have a $D$-graded set of  simple current $W$-modules 
$\{ W^\beta \mid \beta \in D\}$ 
as in Hypothesis \ref{hypo:W_V}, and a $D^\perp$-graded set of 
simple current $U$-modules $\{ U^\gamma \mid \gamma \in D^\perp\}$ 
as in \eqref{eq:Ugamma}.
Consider an action of $D$ on $\Irr(W)$ and an action of $D^\perp$ on $\Irr(U)$ as in 
\eqref{eq:Daction+} and \eqref{eq:Dperpaction}, respectively.
Let $\mathbb{O}_W$ be the set of $D$-orbits in $\Irr(W)$, and let $\mathbb{O}_U$ be 
the set of $D^\perp$-orbits in $\Irr(U)$, respectively. 
Then $\abs{\mathbb{O}_W} = \abs{\mathbb{O}_U} = \abs{I}$. 
We define a map $\Phi : \mathbb{O}_U \to \mathbb{O}_W$ as follows.
Let $\mathcal{O}\in \mathbb{O}_U$ and $M \in \mathcal{O}$.
Define $\Phi(\mathcal{O})$ to be the set of equivalence classes of irreducible 
$W$-submodules of $M$.
Then $\Phi(\mathcal{O})$ is independent of the choice of $M \in \mathcal{O}$, 
and the map $\Phi$ is well-defined.
Moreover, $\mathcal{O} \mapsto \Phi(\mathcal{O})$ 
is a bijection between $\mathbb{O}_U$ and $\mathbb{O}_W$.

The inverse $\Psi : \mathbb{O}_W \to \mathbb{O}_U$ of $\Phi$ is described as follows.
Let $\mathcal{O}' \in \mathbb{O}_W$ and $X \in \mathcal{O'}$.
Then the $\Q/\Z$-valued map $\beta\mapsto b_W(W^\beta,X)$ is linear on $D$ 
by Proposition \ref{prop:bV-bilinear-rev}.
Since $b_V$ is non-degenerate on $C$ by Corollary \ref{cor:bilinear}, 
we can find $\lambda\in C$ such that 
$b_W(W^\beta,X)+b_V(V^\beta,V^{\lambda})=0$ for all $\beta\in D$.
Such a $\lambda$ is unique modulo $D^\perp$. 
By (1) of Theorem \ref{thm:all_modules}, 
$U\boxtimes_{W\otimes V} (X\otimes V^\alpha)$ is an irreducible untwisted $U$-module 
for $\alpha \in \lambda +D^\perp$. 
We define $\Psi(\mathcal{O}')$ to be the set of equivalence classes of 
$U \boxtimes_{W \otimes V} (X \otimes V^\alpha)$,  $\alpha \in \lambda + D^\perp$.
Although $\lambda$ depends on the choice of $X \in \mathcal{O'}$, 
the resulting $D^\perp$-orbit $\Psi(\mathcal{O}')$ is independent of the choice of $X$, 
and it is uniquely determined by $\mathcal{O}'$, 
see the comment just after \eqref{eq:def_Uia+} and Remark \ref{rem:underSC}. 
Thus the map $\Psi$ is well-defined, and  
$\mathcal{O} = \Psi(\mathcal{O}')$ if $\mathcal{O}' = \Phi(\mathcal{O})$.

For $\mathcal{O} \in \mathbb{O}_U$ and $\mathcal{O}' \in \mathbb{O}_W$, 
set
\begin{equation*}
\begin{split}
  (D^\perp)_{\mathcal{O}}
  &= \{ \gamma \in D^\perp \mid U^\gamma \boxtimes_U M \cong M \mbox{ for } 
  M \in \mathcal{O}\},\\
  D_{\mathcal{O}'}
  &= \{ \beta \in D \mid W^\beta \boxtimes_W X \cong X \mbox{ for } 
  X \in \mathcal{O}'\}.
\end{split}
\end{equation*}
Then $D_{\Phi(\mathcal{O})} = (D^\perp)_{\mathcal{O}}$ 
and $(D^\perp)_{\Psi(\mathcal{O}')} = D_{\mathcal{O}'}$ 
by Lemma \ref{lem:DiperpDi}, and we have
\[
  \frac{\abs{\mathcal{O}}}{\abs{\Phi(\mathcal{O})}}
  = \frac{\abs{\Psi(\mathcal{O}')}}{\abs{\mathcal{O}'}}
  = \frac{\abs{D^\perp}}{\abs{D}}.
\]
This equation essentially explains Corollary \ref{cor:count}.

The equivalence classes $\Irr(U)$ and $\Irr(W)$ are mutually described by 
Theorems \ref{thm:all_modules} and \ref{thm:irrWbyU}, and their fusion rules are mutually 
described by Theorem \ref{thm:fusion_rules2}.
Thus we can completely describe the structures of $R(U)$ and $R(W)$ each other 
based on the duality between $D$ and $D^\perp$ in the quadratic space $(C,q_V)$ 
associated with $R(V)$.

\begin{remark}
  Let $\Irr(U;\chi)$ be the set of equivalence classes of irreducible $\chi$-twisted
  $U$-modules for $\chi \in D^*$.
  If $\chi = \eta_\alpha$ for $\alpha\in C$, then by Theorem \ref{thm:all_modules} and 
  (2) of Lemma \ref{lem:onCchi} there is a bijection between $\Irr(U)$ 
  and $\Irr(U;\chi)$ given by 
  \[
    \Theta_\alpha : \Irr(U) \to \Irr(U;\chi);\quad 
    \Theta_\alpha(M) = M \boxtimes_{W\otimes V} (W \otimes V^\alpha)
  \]
  for $M \in \Irr(U)$.
  We can similarly consider an action of $D^\perp$ on $\Irr(U;\chi)$ and obtain
  a $D^\perp$-orbit decomposition of $\Irr(U;\chi)$.
  Then the map $\Theta_\alpha$ induces a bijection between 
  $\mathbb{O}_U$ and the set of $D^\perp$-orbits in $\Irr(U;\chi)$. 
  Hence we have a bijective correspondence between 
  $\mathbb{O}_W$ and the set of $D^\perp$-orbits in $\Irr(U;\chi)$ 
  for any $\chi \in D^*$ as well. 
\end{remark}

\section*{Acknowledgments}

We would like to thank the referee of the previous version of the paper 
for suggesting us the hexagon axioms 
in the proof of Proposition \ref{prop:bV-bilinear-rev} 
and the functor $F$ in Theorem \ref{thm:functor_F}, 
which enable us to remove an assumption in the previous version that 
the conformal weight of any irreducible module for a vertex operator algebra 
is positive except for the adjoint module. 
We also thank 
Dra\v{z}en Adamovi\'c, Thomas  Creutzig, and Ching Hung Lam 
for helpful advice. 
Part of this work was done while the first author was staying at 
Institute of Mathematics, Academia Sinica, Taiwan as a visiting scholar from 
May 1, 2017 through April 30, 2018.
The second author visited the institute in August and September, 2017 and 
February and March, 2018. 
They are grateful to the institute.
The second author was partially supported by 
JSPS KAKENHI grant No.19K03409.

\end{document}